\documentclass{article}

\usepackage{amsthm,amsmath,natbib}
\usepackage{hyperref}


\usepackage{npbayes}

\allowdisplaybreaks

\title{Posteriors, conjugacy, and exponential families for completely random measures}
\author{Tamara Broderick \and Ashia C.~Wilson \and Michael I.~Jordan}

\begin{document}

\maketitle

\begin{abstract}
We demonstrate how to calculate posteriors for general
Bayesian nonparametric priors and likelihoods based on completely
random measures (CRMs). We further show how to represent Bayesian
nonparametric priors as a sequence of finite draws using a
size-biasing approach---and how to represent full Bayesian
nonparametric models via finite marginals. Motivated by conjugate
priors based on exponential family representations of likelihoods, we
introduce a notion of exponential families for CRMs, which we call
exponential CRMs. This construction allows us to specify automatic
Bayesian nonparametric conjugate priors for exponential CRM
likelihoods. We demonstrate that our exponential CRMs allow
particularly straightforward recipes for size-biased and marginal
representations of Bayesian nonparametric models. Along the way, we
prove that the gamma process is a conjugate prior for the Poisson
likelihood process and the beta prime process is a conjugate prior for
a process we call the odds Bernoulli process. We deliver a size-biased
representation of the gamma process and a marginal representation of
the gamma process coupled with a Poisson likelihood process.
\end{abstract}

\section{Introduction}

An important milestone in Bayesian analysis was the development of a general
strategy for obtaining conjugate priors based on exponential family representations
of likelihoods~\citep{degroot:1970:optimal}.  While slavish adherence to exponential-family conjugacy
can be criticized, conjugacy continues to occupy an important place in Bayesian analysis,
for its computational tractability in high-dimensional problems and for its role in inspiring
investigations into broader classes of priors (e.g., via mixtures, limits, or augmentations).
The exponential family is, however, a parametric class of models, and it is of interest
to consider whether similar general notions of conjugacy can be developed for Bayesian
nonparametric models.  Indeed, the nonparametric literature is replete with nomenclature
that suggests the exponential family, including familiar names such as ``Dirichlet,''
``beta,'' ``gamma,'' and ``Poisson.'' These names refer to aspects of the random measures
underlying Bayesian nonparametrics, either the L\'evy measure used in constructing certain
classes of random measures or properties of marginals obtained from random measures.
In some cases, conjugacy results have been established that parallel results from classical 
exponential families;
in particular, the Dirichlet process is known to be conjugate to
a multinomial process likelihood~\citep{ferguson:1973:bayesian}, the beta process is
conjugate to a Bernoulli process~\citep{kim:1999:nonparametric, thibaux:2007:hierarchical}
and to a negative binomial process~\citep{broderick:2015:combinatorial}.  Moreover,
various useful representations for marginal distributions, including stick-breaking
and size-biased representations, have been obtained by making use of properties that
derive from exponential families.  It is striking, however, that these results have
been obtained separately, and with significant effort; a general formalism that encompasses
these individual results has not
yet emerged.
In this paper, we provide the single, holistic framework so strongly 
suggested by the nomenclature.  Within this single framework, we show that it is 
straightforward to calculate posteriors and establish conjugacy.  Our framework includes 
the specification of a Bayesian nonparametric analog of the finite exponential family, 
which allows us to provide automatic and constructive nonparametric conjugate priors 
given a likelihood specification as well as general recipes for marginal and size-biased 
representations.

A broad class of Bayesian nonparametric priors---including those built on the Dirichlet 
process~\citep{ferguson:1973:bayesian}, the beta process~\citep{hjort:1990:nonparametric}, 
the gamma process~\citep{ferguson:1973:bayesian,
lo:1982:bayesian,
titsias:2008:infinite},
and the negative binomial 
process~\citep{zhou:2012:beta,broderick:2015:combinatorial}---can be viewed as
models for the allocation of data points to traits.  These processes give us pairs 
of traits together with rates or frequencies with which the traits occur in some 
population.  Corresponding likelihoods assign each data point in the population 
to some finite subset of traits conditioned on the trait frequencies.  What makes 
these models nonparametric is that the number of traits in the prior is countably 
infinite. Then the (typically random) number of traits to which any individual 
data point is allocated is unbounded, but also there are always new traits to 
which as-yet-unseen data points may be allocated. That is, such a model allows 
the number of traits in any data set to grow with the size of that data set.

A principal challenge of working with such models arises in posterior inference.
There is a countable infinity of trait frequencies in the prior which we must 
integrate over to calculate the posterior of trait frequencies given allocations 
of data points to traits.  Bayesian nonparametric models sidestep the full 
infinite-dimensional integration in three principal ways: conjugacy, size-biased 
representations, and marginalization.

In its most general form, conjugacy simply asserts that the prior is in the same 
family of distributions as the posterior.  When the prior and likelihood are in 
finite-dimensional conjugate exponential families, conjugacy can turn posterior 
calculation into, effectively, vector addition.  As a simple example, consider 
a model with beta-distributed prior, $\prw \sim \tb(\prw | \alpha, \beta)$,
for some fixed hyperparameters $\alpha$ and $\beta$. For the likelihood, let 
each observation $\obsw_{n}$ with $n \in \{1,\ldots,N\}$ be iid Bernoulli-distributed
conditional on parameter $\prw$: $\obsw_{n} \iid \bern(\obsw | \prw)$.  Then the 
posterior is simply another beta distribution, $\tb(\prw | \alpha_{post}, \beta_{post})$, 
with parameters updated via addition: $\alpha_{post} := \alpha + \sum_{n=1}^{N} x_{n}$ 
and $\beta_{post} := \beta + N - \sum_{n=1}^{N} x_{n}$.  While conjugacy is certainly 
useful and popular in the case of finite parameter cardinality, there is arguably a 
stronger computational imperative for its use in the infinite-parameter case.
Indeed, the core prior-likelihood pairs of Bayesian nonparametrics are generally
proven \citep{hjort:1990:nonparametric, kim:1999:nonparametric,lo:1982:bayesian,
thibaux:2007:hierarchical, broderick:2015:combinatorial},
or assumed to be \citep{titsias:2008:infinite, thibaux:2008:nonparametric},
conjugate. When such proofs exist, though, thus far they have been specialized 
to specific pairs of processes.  In what follows, we demonstrate a general way 
to calculate posteriors for a class of distributions that includes all of these
classical Bayesian nonparametric models.  We also define a notion of exponential 
family representation for the infinite-dimensional case and show that, given a 
Bayesian nonparametric exponential family likelihood, we can readily construct 
a Bayesian nonparametric conjugate prior.

Size-biased sampling provides a finite-dimensional distribution for each of the 
individual prior trait frequencies \citep{thibaux:2007:hierarchical,paisley:2010:stick}.
Such a representation has played an important role in Bayesian nonparametrics in
recent years, allowing for either exact inference via slice
sampling \citep{damien:1999:gibbs,neal:2003:slice}---as demonstrated
by \citet{teh:2007:stick,broderick:2015:combinatorial}---or approximate 
inference via truncation \citep{doshi:2009:variational, paisley:2011:variational}.
This representation is particularly useful for building hierarchical models 
\citep{thibaux:2007:hierarchical}. 
We show that our framework yields such representations in general, and we show
that our construction is especially straightforward to use in the exponential 
family framework that we develop.

Marginal processes avoid directly representing the infinite-dimensional prior and 
posterior altogether by integrating out the trait frequencies. Since the trait 
allocations are finite for each data point, the marginal processes are finite 
for any finite set of data points. Again, thus far, such processes have been shown 
to exist separately in special cases; for example, the Indian buffet process 
\citep{griffiths:2006:infinite} is the marginal process for the beta process prior 
paired with a Bernoulli process likelihood \citep{thibaux:2007:hierarchical}. 
We show that the integration that generates the marginal process from the full 
Bayesian model can be generally applied in Bayesian nonparametrics and takes a 
particularly straightforward form when using conjugate exponential family priors 
and likelihoods. We further demonstrate that, in this case, a basic, constructive 
recipe exists for the general marginal process in terms of only finite-dimensional 
distributions.

Our results are built on the general class of stochastic processes known as
\emph{completely random measures} (CRMs) \citep{kingman:1967:completely}.
We review CRMs in \mysec{crm} and we discuss what assumptions are needed 
to form a full Bayesian nonparametric model from CRMs in \mysec{bnp}.
Given a general Bayesian nonparametric prior and likelihood (\mysec{prior_like}), 
we demonstrate in \mysec{post} how to calculate the posterior.  Although 
the development up to this point is more general, we next introduce a concept 
of exponential families for CRMs (\mysec{exp_fam_crm}) and call such models 
\emph{exponential CRMs}. We show that we can generate automatic conjugate priors given 
exponential CRM likelihoods in \mysec{auto_conj}. Finally, we show how we can
generate recipes for size-biased representations (\mysec{size}) and marginal 
processes (\mysec{marg}), which are particularly straightforward in the exponential 
CRM case (\cor{size} in \mysec{size} and \cor{marg} in \mysec{marg}).
We illustrate our results on a number of examples and derive new conjugacy 
results, size-biased representations, and marginal processes along the way.

We note that some similar results have been obtained by \citet{orbanz:2010:conjugate}
and \citet{james:2014:poisson}. In the present work, we focus on creating 
representations that allow tractable inference.

\section{Bayesian models based on completely random measures} \label{sec:gen}

As we have discussed, we view Bayesian nonparametric models as being composed
of two parts: (1) a collection of pairs of traits together with their frequencies 
or rates and (2) for each data point, an allocation to different traits.
Both parts can be expressed as \emph{random measures}.  Recall that a random 
measure is a random element whose values are measures.

We represent each trait by a point $\prl$ in some space $\locsp$ of traits.
Further, let $\prw_{k}$ be the frequency, or rate, of the trait represented by $\prl_{k}$,
where $k$ indexes the countably many traits. In particular, 
$\prw_{k} \in \obssp$. Then $(\prw_{k}, \prl_{k})$ is a tuple
consisting of the frequency of the $k$th trait together with its trait descriptor.
We can represent the full collection
of pairs of traits with their frequencies by the discrete measure on $\locsp$ that places weight
$\prw_{k}$ at location $\prl_{k}$:
\begin{equation} \label{eq:prm}
	\prm = \sum_{k=1}^{K} \prw_{k} \delta_{\prl_{k}},
\end{equation}
where the cardinality $K$ may be finite or infinity.

Next, we form data point $\obsm_{n}$ for the $n$th individual.
The data point $\obsm_{n}$ is viewed as a discrete measure. 
Each atom of $\obsm_{n}$ represents a pair consisting of 
(1) a trait to which the $n$th individual is allocated and 
(2) a degree to which the $n$th individual is allocated to this 
particular trait. That is,
\begin{equation} \label{eq:obsm}
	\obsm_{n} = \sum_{k=1}^{K_{n}} \obsw_{n,k} \delta_{\prl_{n,k}},
\end{equation}
where again $\prl_{n,k} \in \locsp$ represents a trait and now $\obsw_{n,k} \in \obssp$
represents the degree to which the $n$th data point
belongs to trait $\prl_{n,k}$. $K_{n}$ is the total number of traits to which the $n$th data point belongs.

Here and in what follows, we treat $\obsm_{1:N} = \{\obsm_{n}: n \in [N]\}$ as our observed data points
for $[N] := \{1,2,3,\ldots,N\}$.
In practice
$\obsm_{1:N}$ is often
incorporated into a more complex Bayesian hierarchical model. For instance,
in topic modeling, $\prl_{k}$ represents a topic; that is, $\prl_{k}$ is a 
distribution over words in a vocabulary~\citep{blei:2003:latent,teh:2006:hierarchical}.
$\prw_{k}$ might 
represent the frequency with which the topic $\prl_{k}$ occurs in a corpus 
of documents. $\obsw_{n,k}$ might be a positive integer and represent the number of words in topic
$\prl_{n,k}$ that occur in the $n$th document. So the $n$th document has a total length of
$\sum_{k=1}^{K_{n}} \obsw_{n,k}$ words. In this case, the actual observation consists of the words in
each document, and the topics are latent. Not only are the results concerning posteriors, conjugacy,
and exponential family representations that we develop below useful for inference in such models, but
in fact our results are especially useful in such models---where
the traits and any ordering on the traits are not known in advance.

Next, we want to specify a full Bayesian model for our data points $\obsm_{1:N}$. To do so,
we must first define a prior distribution for the random measure $\prm$ as well as a 
likelihood for each random measure $\obsm_{n}$ conditioned on $\prm$.
We let $\siglocsp$ be a $\sigma$-algebra of subsets of $\locsp$, where we assume all singletons
are in $\siglocsp$. Then we consider random measures $\prm$ and $\obsm_{n}$ whose values
are measures on $\locsp$. Note that for any random measure $\prm$ and any
measurable set $A \in \siglocsp$, $\prm(A)$ is a random variable.

\subsection{Completely random measures} \label{sec:crm}

We can see from \eqs{prm} and \eqss{obsm} that we desire a distribution on 
random measures that yields discrete measures almost surely. A particularly 
simple form of random measure called a \emph{completely random measure}
can be used to generate
a.s.\ discrete random measures~\citep{kingman:1967:completely}.

A completely random measure $\prm$ is defined as a random measure that satisfies
one additional
property; for any disjoint, measurable sets $A_{1}, A_{2}, \ldots, A_{K} \in \siglocsp$, we require that
$\prm(A_{1}), \prm(A_{2}), \ldots, \prm(A_{K})$ be independent random variables.
\citet{kingman:1967:completely}
showed that a completely random measure can always be decomposed into a sum of
three independent parts:
\begin{equation} \label{eq:crm_parts}
	\prm = \prm_{det} + \prm_{fix} + \prm_{ord}.
\end{equation}
Here, $\prm_{det}$ is the deterministic component, $\prm_{fix}$ is the \emph{fixed-location}
component, and $\prm_{ord}$ is the \emph{ordinary} component.
In particular, $\prm_{det}$ is any deterministic measure. We define the remaining two parts next.

The fixed-location
component is called the ``fixed component'' by \citet{kingman:1967:completely},
but we expand the name slightly here to
emphasize that $\prm_{fix}$ is defined to be constructed from a set of random weights at 
fixed (i.e., deterministic) locations. That is,
\begin{equation} \label{eq:fix_discrete}
	\prm_{fix} = \sum_{k=1}^{K_{fix}} \prw_{fix, k} \delta_{\prl_{fix, k}},
\end{equation}
where the number of
fixed-location atoms, $K_{fix}$, may be either finite or infinity;
$\prl_{fix,k}$ is deterministic, and $\prw_{fix,k}$
is a non-negative, real-valued random variable (since $\Phi$
is a measure).
Without loss of generality,
we assume that the locations $\prl_{fix, k}$ are all distinct. Then,
by the independence assumption of CRMs, we must have that $\prw_{fix, k}$
are independent random variables across $k$.
Although the fixed-location atoms are often ignored
in the Bayesian nonparametrics literature,
we will see that the fixed-location component has a key role
to play in establishing Bayesian nonparametric conjugacy
and in the CRM representations we present.

The third and final component is the ordinary component. 
Let $\#(A)$ denote the cardinality of some countable set $A$.
Let $\fullratem$ be any $\sigma$-finite, deterministic measure on
$\obssp \times \locsp$, where $\obssp$ is equipped
with the Borel $\sigma$-algebra and $\Sigma_{\obssp \times \locsp}$
is the resulting product $\sigma$-algebra given $\Sigma_{\locsp}$.
Recall that a \emph{Poisson
point process} with rate measure $\fullratem$ on $\obssp \times \locsp$
is a random countable subset $\Pi$ of $\obssp \times \locsp$ such that two
properties hold \citep{kingman:1992:poisson}:
\begin{enumerate}
	\item For any $A \in \Sigma_{\obssp \times \locsp}$,
		$\#(\Pi \cap A) \sim \pois(\fullratem(A))$.
	\item For any disjoint $A_1, A_2, \ldots, A_K \in \Sigma_{\obssp \times \locsp}$,
		$\#(\Pi \cap A_1), \#(\Pi \cap A_2), \cdots, \#(\Pi \cap A_K)$ are
independent random variables.
\end{enumerate}
To generate an ordinary component, start
with a Poisson point process on $\obssp \times \locsp$, characterized
by its rate measure $\fullratem(d\prw \times d\prl)$. This process yields $\Pi$,
a random and countable
set of points: $\Pi = \{(\prw_{ord,k}, \prl_{ord,k}) \}_{k=1}^{K_{ord}}$, where $K_{ord}$ may be
finite or infinity.
Form the ordinary component measure by letting $\prw_{ord,k}$ be the weight of the atom located at
$\prl_{ord,k}$:
\begin{equation} \label{eq:ord_discrete}
	\prm_{ord} = \sum_{k=1}^{K_{ord}} \prw_{ord,k} \delta_{\prl_{ord,k}}.
\end{equation}

Recall that we stated at the start of \mysec{crm} that CRMs may be used to
produce a.s.\ discrete
random measures. To check this assertion, note that $\prm_{fix}$ is a.s.\ 
discrete by construction (\eq{fix_discrete}) and $\prm_{ord}$ is a.s.\ discrete
by construction (\eq{ord_discrete}). 
$\prm_{det}$ is the one component that may not be a.s.\ atomic.
Thus the prevailing norm in using models based on CRMs is to set $\prm_{det} \equiv 0$;
in what follows, we adopt
this norm.
If the reader is concerned about missing any atoms in $\prm_{det}$, note that it is straightforward to adapt 
the treatment of $\prm_{fix}$ to include the case where the atom weights are deterministic.
When we set
$\prm_{det} \equiv 0$,
we are left with 
$\prm = \prm_{fix} + \prm_{ord}$ by \eq{crm_parts}.
So $\prm$ is also discrete, as desired.

\subsection{Prior and likelihood} \label{sec:prior_like}

The prior that we place on $\prm$ will be a fully general CRM (minus any deterministic 
component) with one additional assumption on the rate measure of the ordinary component.
Before incorporating the additional assumption, we say that $\prm$ has 
a fixed-location component with $K_{fix}$ atoms, where the
$k$th atom has arbitrary distribution $\dfixwk$: $\prw_{fix,k} \indep \dfixwk(d\prw)$.
$K_{fix}$ may be finite or infinity, and $\prm$ has an ordinary component characterized 
by rate measure $\fullratem(d\prw \times d\prl)$.  The additional assumption we make 
is that the distribution on the weights in the ordinary component is assumed to be
decoupled from the distribution on the locations. That is, the rate measure decomposes
as
\begin{equation} \label{eq:rate_factorize}
	\fullratem(d\prw \times d\prl) = \wratem(d\prw) \cdot \dordloc(d\prl),
\end{equation}
where $\wratem$ is any $\sigma$-finite, deterministic measure on 
$\obssp$ and $\dordloc$ is any proper distribution on $\locsp$.
While the distribution over locations has been discussed 
extensively elsewhere \citep{neal:2000:markov,wang:2013:variational}, it
is the weights that affect the allocation of data points to traits. 

Given the factorization of $\fullratem$ in \eq{rate_factorize},
 the ordinary component of $\prm$ can be generated by letting 
$\{\prw_{fix,k}\}_{k=1}^{K_{ord}}$ be the points of a Poisson point 
process generated on $\mathbb{R}_{+}$ with rate 
$\wratem$.\footnote{Recall that $K_{ord}$ may be finite or infinity
depending on $\wratem$ and is random when taking finite values.}
We then draw the locations $\{\prl_{fix,k}\}_{k=1}^{K_{ord}}$
iid according to $\dordloc(d\prl)$: $\prl_{fix,k} \iid \dordloc(d\prl)$.
Finally, for each $k$, $\prw_{fix,k} \delta_{\prl_{fix,k}}$ is an atom in 
$\prm_{ord}$.
This factorization will allow us to focus our attention on the trait frequencies,
and not the trait locations,
in what follows. Moreover, going forward, we will assume $\dordloc$ is diffuse
(i.e., $\dordloc$ has no atoms)
so that the ordinary component atoms are all at a.s.\ distinct locations, which are further
a.s.\ distinct from the fixed locations.

Since we have seen that $\prm$ is an a.s.\ discrete random measure, we can write
it as
\begin{equation} \label{eq:discrete_prior}
	\prm = \sum_{k=1}^{K} \prw_{k} \delta_{\prl_{k}},
\end{equation}
where $K := K_{fix} + K_{ord}$
may be finite or infinity, and every $\prl_{k}$ is a.s.\ unique.
That is, we will sometimes find it helpful notationally to use \eq{discrete_prior}
instead of separating the fixed and ordinary components.
At this point, we have specified the prior for $\prm$ in our general model.

Next, we specify the likelihood; i.e., we specify how to generate the data points
$\obsm_{n}$ given $\prm$.
We will
assume each $\obsm_{n}$ is generated iid given $\prm$ across the data indices $n$.
We will let $\obsm_{n}$ be a CRM with only a fixed-location component
given $\prm$. In particular, 
the atoms of $\obsm_{n}$ will be located at the atom locations of $\prm$,
which are fixed when we condition on $\prm$:
$$
	\obsm_{n} := \sum_{k=1}^{K} \obsw_{n,k} \delta_{\prl_{k}}.
$$
Here, $\obsw_{n,k}$ is drawn according to some distribution $\dlike$ that may
take $\prw_{k}$, the weight of $\prm$ at location $\prl_{k}$,
as a parameter; i.e.,
\begin{equation} \label{eq:alloc_distr}
	\obsw_{n,k} \indep \dlike(d\obsw | \prw_{k}) \quad \textrm{independently across $n$ and $k$}.
\end{equation}

Note that while every atom of $\obsm_{n}$ is located at an atom of $\prm$,
it is not necessarily the case that every atom of $\prm$ has a corresponding atom
in $\obsm_{n}$. In particular, if $\obsw_{n,k}$ is zero for any $k$, there is no atom
in $\obsm_{n}$ at $\prl_{k}$.

We highlight that the model above stands in contrast to Bayesian nonparametric
\emph{partition} models, for which there is a large literature. In partition models (or clustering models), $\prm$ is
a random probability measure \citep{ferguson:1974:prior};
in this case,
the probability constraint precludes $\prm$ from being a completely
random measure, but it is often chosen to be a normalized completely random
measure \citep{james:2009:posterior, lijoi:2010:models}. The choice of Dirichlet process (a normalized gamma process) for $\prm$ is particularly popular due
to a number of useful properties that coincide in this single choice \citep{doksum:1974:tailfree, escobar:1994:estimating, escobar:1995:bayesian, escobar:1998:computing, ferguson:1973:bayesian, lo:1984:class, maceachern:1994:estimating, perman:1992:size, pitman:1996:random, pitman:1996:some, sethuraman:1994:constructive, west:1994:hierarchical}.
In partition models, $\obsm_{n}$ is a draw from the probability distribution described by $\prm$.
If we think of such $\obsm_n$ as a random measure, it is a.s.\ a single unit mass at a point $\prl$ with
strictly positive
probability in $\prm$.

One potential connection between these two types of models is provided by combinatorial clustering
\citep{broderick:2015:combinatorial}. In partition models, we might suppose that we have a number
of data sets, all of which we would like to partition. For instance, in a document modeling scenario, 
each document might be a data set; in particular each data point is a word in the document. And we might wish to 
partition the words in each document. An alternative perspective is to suppose that there is a single data set,
where each data point is a document. Then the document exhibits traits with multiplicities, where the multiplicities
might be the number of words from each trait; typically a trait in this application would be a topic. In this case,
there are a number of other names besides feature or trait model that may be applied to the overarching model---such
as admixture model or mixed membership model \citep{airoldi:2014:handbook}.

\subsection{Bayesian nonparametrics} \label{sec:bnp}

So far we have described a prior and likelihood that may be used to form a Bayesian
model.
We have already stated above that forming a \emph{Bayesian nonparametric}
model
imposes some restrictions on the prior and likelihood. We 
formalize these restrictions
in \asus{fix}, \asuss{inf}, and \asuss{fin} below.

Recall that the premise of Bayesian
nonparametrics is that the number of traits represented in a collection of data
can grow with the number of data points.
More explicitly, we achieve the desideratum that the number of traits is unbounded,
and may always grow as new data points are collected,
by modeling a countable infinity of traits. This assumption requires
that the prior have a countable infinity of atoms. These must either be fixed-location atoms
or ordinary component atoms. 
Fixed-location atoms represent known traits in some sense since we must know
the fixed locations of the atoms in advance. Conversely, ordinary component atoms
represent unknown traits, as yet to be discovered, since both their locations and associated
rates are unknown a priori.
Since we cannot know (or represent) a countable infinity of
traits a priori, we cannot start with a countable infinity of fixed-location atoms. 
\begin{enumerate}[label=A\arabic{*}., ref=A\arabic{*},leftmargin=5.0em,start=0]
	\item \label{as:fix} The number of fixed-location atoms in $\prm$ is finite.
\end{enumerate}
Since we require a countable infinity of traits in total and they cannot come from the
fixed-location atoms by \asu{fix},
the ordinary component must contain a countable infinity of atoms. This assumption
will be true if and only if the rate measure on the trait frequencies has infinite mass.
\begin{enumerate}[label=A\arabic{*}., ref=A\arabic{*},leftmargin=5.0em,start=1]
	\item \label{as:inf} $\wratem(\obssp) = \infty$.
\end{enumerate}

Finally, an implicit part of the starting premise is
that each data point be allocated to only a finite number of traits;
we do not expect to glean an infinite amount of information from finitely represented
data. Thus, we require that the number of atoms in every $\obsm_{n}$ be finite.
By \asu{fix}, the number of atoms in $\obsm_{n}$ that correspond to fixed-location atoms
in $\prm$ is finite. But by \asu{inf}, the number of atoms in $\prm$ from the ordinary component
is infinite. So there must be some restriction on the distribution of values of 
$\obsm$ at the atoms of $\prm$ (that is, some 
restriction on $\dlike$ in \eq{alloc_distr}) such that only finitely many
of these values are nonzero. 

In particular, note that if $\dlike(d\obsw | \prw)$ does not contain an atom at zero
for any $\prw$, then 
a.s.\ every one of the countable infinity of atoms of $\obsm$ will be nonzero.
Conversely, it follows that, for our desiderata to hold, we must
have that $\dlike(d\obsw | \prw)$ exhibits an atom at zero.
One consequence of this observation is that
$\dlike(d\obsw | \prw)$ cannot be purely continuous
for all $\prw$.
Though this line of reasoning
does not necessarily preclude a mixed continuous and discrete $\dlike$,
we henceforth assume that $\dlike(d\obsw | \prw)$ is discrete, with support
$\Zstar = \{0,1,2,\ldots\}$, for all $\prw$. 

In what follows, we write 
$\denslike(\obsw | \prw)$ for the probability mass function of $\obsw$ given $\prw$.
So our requirement  
that each data point be allocated to only a finite number of traits
translates into a requirement that the number of atoms of $\obsm_{n}$ with
values in $\Zplus = \{1,2,\ldots\}$ be finite. Note that, by construction,
the pairs $\{(\prw_{ord,k}, \obsw_{ord,k})\}_{k=1}^{K_{ord}}$ form a marked Poisson point process
with rate measure
$\fullratem_{mark}(d\prw \times d\obsw) := \wratem(d\prw) \denslike(\obsw | \prw)$.
And the pairs with
$\obsw_{ord,k}$ equal to any particular value $\obsw \in \Zplus$ further form a thinned
Poisson point process with rate measure
$\wratem_{\obsw}(d\prw) := \wratem(d\prw) \denslike(\obsw | \prw)$.
In particular, the number of atoms of $\obsm$ with weight $\obsw$
is Poisson-distributed with mean $\wratem_{\obsw}(\obssp)$.
So the number of atoms of $\obsm$ is finite if and only if
the following assumption holds.\footnote{When we have the more general
case of a mixed continuous and discrete $\dlike$,
\asu{fin} becomes
\begin{enumerate}[label=A2b., ref=A2b, leftmargin=5.0em]
	\item \label{as:fin_full} $\int_{\obsw > 0} \int_{\prw \in \mathbb{R}_{+}}
		\wratem(d\prw) \dlike(d\obsw | \prw) < \infty$.
\end{enumerate}
}
\begin{enumerate}[label=A\arabic{*}., ref=A\arabic{*},leftmargin=5.0em,start=2]
	\item \label{as:fin} $\sum_{\obsw=1}^{\infty} \wratem_{\obsw}(\obssp) < \infty$
		for $\wratem_{\obsw} := \wratem(d\prw) \denslike(\obsw | \prw)$.
\end{enumerate}

Thus \asus{fix}, \asuss{inf}, and \asuss{fin} capture our Bayesian nonparametric
desiderata. We illustrate the development so far with an example.

\begin{example} \label{ex:bp_bep}
	The \emph{beta process} \citep{hjort:1990:nonparametric}
	provides an example distribution for $\prm$. In its most general form,
	sometimes called the \emph{three-parameter beta process}
	\citep{teh:2009:indian, broderick:2012:beta},
	the beta process has an ordinary component
	whose weight rate measure has a beta distribution kernel,
	\begin{equation} \label{eq:bp_ord}
		\wratem(d\prw) = \massp
			\prw^{-\discp-1} (1-\prw)^{\concp + \discp - 1} d\prw,
	\end{equation}
	with support on $(0,1]$.
	Here, the three fixed hyperparameters are $\massp$, the \emph{mass parameter}; 
	$\concp$, the \emph{concentration parameter}; and $\discp$,
	the \emph{discount parameter}.\footnote{
	In \citep{teh:2009:indian, broderick:2012:beta}, the ordinary component
	features the beta distribution kernel in \eq{bp_ord} multiplied not only by $\massp$
	but also by
	a more complex, positive, real-valued expression in $\concp$ and $\discp$. Since
	all of $\massp$, $\concp$, and $\discp$ are fixed hyperparameters,
	and $\massp$ is an arbitrary positive real value, any other constant
	factors containing the
	hyperparameters can be absorbed into $\massp$, as in the main text here.}
	Moreover, each of its $K_{fix}$ fixed-location atoms, $\prw_{k} \delta_{\prl_{k}}$, has
	a beta-distributed weight \citep{broderick:2015:combinatorial}:
	\begin{equation} \label{eq:bp_fix}
		\prw_{fix,k} \sim \tb(\prw | \rho_{fix,k}, \sigma_{fix,k}),
	\end{equation}
	where $\rho_{fix,k}, \sigma_{fix,k} > 0$ are fixed hyperparameters of the model.
	
	By \asu{fix}, $K_{fix}$ is finite. By \asu{inf},
	$\wratem(\obssp) = \infty$. To achieve this infinite-mass restriction,
	the beta kernel in \eq{bp_ord}
	must be improper; i.e., either $-\discp \le 0$ or $\concp + \discp \le 0$. Also, note that
	we must have $\massp > 0$
	since $\wratem$ is a measure (and the case $\massp = 0$ would be trivial).
	
	Often the beta process is used as 
	a prior paired with a \emph{Bernoulli process} likelihood
	\citep{thibaux:2007:hierarchical}. The Bernoulli
	process specifies that, given $\prm = \sum_{k=1}^{\infty} \prw_{k} \delta_{\prl_{k}}$,
	we draw
	$$
		\obsw_{n,k} \indep \bern( \obsw | \prw_{k} ),
	$$
	which is well-defined since every atom weight $\prw_{k}$ of $\prm$ is in $(0,1]$ by the beta process
	construction. Thus,
	$$
		\obsm_{n} = \sum_{k=1}^{\infty} \obsw_{n,k} \delta_{\prl_{k}}.
	$$
	The marginal distribution of the $\obsm_{1:N}$ in this case is often called an
	\emph{Indian buffet process} \citep{griffiths:2006:infinite,thibaux:2007:hierarchical}. The locations of atoms in $\obsm_{n}$ are thought of as the dishes
	sampled by the $n$th customer.
	
	We take a moment to highlight the fact that continuous distributions for $\dlike(d\obsw | \prw)$ are
	precluded based on the Bayesian nonparametric desiderata by considering an alternative
	likelihood. Consider instead if $\dlike(d\obsw | \prw)$ were continuous here. Then $\obsm_{1}$
	would have atoms at every atom of $\prm$. In the Indian buffet process analogy, any customer
	would sample an infinite number of dishes, which contradicts our assumption
	that our data are finite. Indeed, any customer would sample all of the dishes at once.
	 It is quite often the case in practical applications, though, that the $\obsm_{n}$
	are merely latent variables, with the observed variables chosen according to a (potentially continuous) distribution
	given $\obsm_{n}$ \citep{griffiths:2006:infinite,thibaux:2007:hierarchical}; consider, e.g., mixture and admixture models. These cases are not precluded by our development.
	
	Finally, then, we may apply \asu{fin}, which specifies that the number of atoms
	in each observation $\obsm_{n}$ is finite; in this case, the assumption means
	\begin{align*}
		\lefteqn{ \sum_{\obsw=1}^{\infty} \int_{\prw \in \obssp} \wratem(d\prw)
			\cdot \denslike(\obsw | \prw) 
			= \int_{\prw \in (0,1]} \wratem(d\prw)
				\cdot \denslike(1 | \prw) } \\
			& \quad \textrm{since $\prw$ is supported on $(0,1]$ and $\obsw$ is supported on $\{0,1\}$} \\
			&= \int_{\prw \in (0,1]} 
				\massp \prw^{-\discp-1} (1-\prw)^{\concp + \discp - 1} d\prw
				\cdot \prw 
			= \massp \int_{\prw \in (0,1]} \prw^{1-\discp - 1} (1-\prw)^{\concp + \discp - 1} d\prw 
			< \infty.
	\end{align*}
	The integral here is finite if and only if $1-\discp$ and $\concp + \discp$ are the parameters of 
	a proper beta distribution: i.e., if and only if $\discp < 1$ and $\concp > -\discp$.
	Together with the restrictions above, these restrictions
	imply the following allowable parameter ranges
	for the beta process fixed hyperparameters:
	\begin{align} \label{eq:bp_params}
	\begin{split}
		\massp > 0, \quad
		\discp \in [0,1), \quad
		\concp > -\discp, \quad
		\rho_{fix,k}, \sigma_{fix,k} &> 0 \quad \textrm{for all $k \in [K_{fix}]$}.
	\end{split}
	\end{align}
	These correspond to the hyperparameter ranges previously found in \citep{teh:2009:indian, broderick:2012:beta}.
\end{example}

\section{Posteriors} \label{sec:post}

In \mysec{gen}, we defined a full Bayesian model
consisting of a CRM prior for $\prm$ and a CRM likelihood
for an observation $\obsm$ conditional on $\prm$.
Now we would like to calculate the posterior distribution
of $\prm | \obsm$.

\begin{theorem}[Bayesian nonparametric posteriors] \label{thm:post}
	Let $\prm$ be a completely random measure that satisfies \asus{fix} and
	\asuss{inf}; that is, $\prm$ is a CRM with $K_{fix}$ fixed atoms such that
	$K_{fix} < \infty$ and such that the $k$th atom can be written
	$\prw_{fix,k} \delta_{\prl_{fix,k}}$ with
	$$
		\prw_{fix,k} \indep \dfixwk(d\prw)
	$$
	for proper distribution $\dfixwk$ and deterministic $\prl_{fix,k}$.
	Let the ordinary component of $\prm$ have rate measure
	$$
		\fullratem(d\prw \times d\prl) = \wratem(d\prw) \cdot \dordloc(d\prl),
	$$
	where $\dordloc$ is a proper distribution and $\wratem(\obssp) = \infty$.
	Write $\prm = \sum_{k=1}^{\infty} \prw_{k} \delta_{\prl_{k}}$, and let
	$\obsm$ be generated conditional on $\prm$ according to 
	$\obsm = \sum_{k=1}^{\infty} \obsw_{k} \delta_{\prl_{k}}$ with
	$\obsw_{k} \indep \denslike(\obsw | \prw_{k})$ for proper, discrete
	probability mass function $\denslike$. And suppose $\obsm$ and $\prm$ jointly satisfy
	\asu{fin} so that
	$$
		\sum_{\obsw=1}^{\infty} \int_{\prw \in \obssp} \wratem(d\prw) \denslike(\obsw | \prw) < \infty.
	$$
	
	Then let $\prm_{post}$ be a random measure with the distribution of $\prm | \obsm$.
	$\prm_{post}$ is a completely random measure with three parts.
	\begin{enumerate}
		\item For each $k \in [K_{fix}]$, $\prm_{post}$ has a fixed-location atom
		at $\prl_{fix,k}$ with weight $\prw_{post,fix,k}$ distributed according to
		the finite-dimensional posterior $\dfixwpostk(d\prw)$
		that comes from prior $\dfixwk$, likelihood
		$\denslike$, and observation
		$\obsm(\{\prl_{fix,k}\})$.
		\item Let $\{\obsw_{new,k} \delta_{\prl_{new,k}}: k \in [K_{new}] \}$ be the atoms
		of $\obsm$ that are not at fixed locations in the prior of $\prm$. $K_{new}$ is finite
		by \asu{fin}. Then $\prm_{post}$ has a fixed-location atom
		at $\obsw_{new,k}$ with random weight $\prw_{post,new,k}$,
		whose distribution $\dnewwpostk(d\prw)$ is proportional to
		$$
			\wratem(d\prw) \denslike(\obsw_{new,k} | \prw).
		$$
		\item The ordinary component of $\prm_{post}$ has rate measure
		$$
			\wratem_{post}(d\prw) := \wratem(d\prw) \denslike(0 | \prw).
		$$
	\end{enumerate}
\end{theorem}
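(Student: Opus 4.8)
The plan is to exploit the independence structure of the CRM prior: since $\prm = \prm_{fix} + \prm_{ord}$ with these two components independent, and since the likelihood in \eq{alloc_distr} acts independently at each atom, the posterior decomposes into independent pieces that I can analyze one at a time. The fixed-location atoms reduce to classical finite-dimensional Bayesian updates, while the ordinary component is handled through the marking and coloring (thinning) theorems for Poisson point processes that were already invoked in the text. Showing that each piece has the claimed law, and that the pieces remain mutually independent, will establish that $\prm_{post}$ is again a CRM and will deliver parts 1--3.

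First I would dispatch the fixed-location component. For each $k \in [K_{fix}]$ the prior weight $\prw_{fix,k} \sim \dfixwk$ is informed only by the single datum $\obsm(\{\prl_{fix,k}\})$, and by independence across $k$ the joint posterior factorizes. Each factor is exactly the finite-dimensional posterior $\dfixwpostk$ arising from prior $\dfixwk$, likelihood $\denslike$, and that datum, yielding part 1; this step is routine and requires no infinite-dimensional machinery.

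The heart of the argument is the ordinary component. As already noted in the text, the triples $(\prw_{ord,k}, \prl_{ord,k}, \obsw_{ord,k})$ form a marked Poisson point process with rate measure $\wratem(d\prw)\,\dordloc(d\prl)\,\denslike(\obsw \mid \prw)$. I would color this process by whether the mark $\obsw$ is zero, splitting it into two independent subprocesses. By \asu{fin} the $\obsw > 0$ subprocess has finite total rate, hence finitely many atoms almost surely---these are precisely the $K_{new}$ new atoms---while the $\obsw = 0$ subprocess retains infinite mass. The observation $\obsm$ reveals exactly the finite $\obsw > 0$ configuration, namely the locations $\prl_{new,k}$ and the mark values $\obsw_{new,k}$, but not the underlying weights. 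Two conditioning computations then finish the proof. For the unobserved $\obsw = 0$ subprocess, independence of colored Poisson processes forces its posterior to equal its prior, namely the process with weight rate $\wratem_{post}(d\prw) = \wratem(d\prw)\,\denslike(0 \mid \prw)$ after dropping the mark coordinate; this is part 3. For each observed atom, finiteness of the $\obsw > 0$ subprocess lets me condition on its projection onto the location-and-mark coordinates, whereupon the points are independent with each weight distributed as the conditional of the rate measure---i.e.\ proportional to $\wratem(d\prw)\,\denslike(\obsw_{new,k} \mid \prw)$, the $\dordloc$ factor dropping out at fixed location; these become the posterior fixed-location atoms of part 2. Collecting the three mutually independent pieces shows $\prm_{post}$ has independent weights over disjoint location sets, hence is a CRM.

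The main obstacle is the infinite-mass regime of \asu{inf}: because $\wratem(\obssp) = \infty$, one cannot write a single likelihood ratio for the entire ordinary component, so a naive application of Bayes' rule is unavailable. The coloring step is exactly what circumvents this, isolating the finite-mass, \emph{observed} $\obsw > 0$ subprocess---on which genuine conditioning is legitimate---from the infinite-mass, \emph{unobserved} $\obsw = 0$ subprocess, on which independence rather than conditioning does the work. The remaining care is in justifying that the observation carries no information about the zero-marked atoms, and that conditioning a finite-mass Poisson process on a coordinate projection yields the stated per-atom conditional; both follow from standard Poisson disintegration once the coloring decomposition is in place.
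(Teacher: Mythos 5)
Your proposal is correct and follows essentially the same route as the paper's proof: a finite-dimensional Bayes update at each fixed-location atom, and marking/thinning of the ordinary component's Poisson point process to separate the finitely many observed ($\obsw > 0$) atoms---whose weights have law proportional to $\wratem(d\prw)\,\denslike(\obsw_{new,k} \mid \prw)$ by normalizing the finite thinned rate measure---from the unobserved zero-marked subprocess, which retains rate $\wratem(d\prw)\,\denslike(0 \mid \prw)$. The only cosmetic difference is that the paper thins separately by each value $\obsw \in \Zplus$ while you color by zero versus nonzero and then invoke Poisson disintegration given the location-and-mark projection; your version makes explicit the conditioning step that the paper states implicitly, but the underlying argument is identical.
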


%
\begin{proof}
To prove the theorem, we consider in turn each of the two parts of the prior: the fixed-location
component and the ordinary component.
First, consider any fixed-location atom, $\prw_{fix,k} \delta_{\prl_{fix,k}}$, in the prior.
All of the other fixed-location atoms in the prior, as well as the prior ordinary component, are
independent of the random weight $\prw_{fix,k}$. So it follows that all of $\obsm$ except
$\obsw_{fix,k} := \obsm(\{\prl_{fix,k}\})$ is independent of $\prw_{fix,k}$. Thus the posterior
has a fixed atom located at $\prl_{fix,k}$ whose weight, which we denote $\prw_{post,fix,k}$,
has distribution
$$
	\dfixwpostk(d\prw) \propto \dfixwk(d\prw) \denslike(\obsw_{fix,k} | \prw),
$$
which follows from the usual finite Bayes Theorem.

Next, consider the ordinary component in the prior. 
Let
$$
	\locsp_{fix} = \{\prl_{fix,1}, \ldots, \prl_{fix,K_{fix}}\}
$$
be the set of fixed-location
atoms in the prior. Recall that $\locsp_{fix}$ is deterministic, and since
$\dordloc$ is continuous, all of the fixed-location atoms and ordinary component
atoms of $\prm$ are at a.s.\ distinct locations. So the measure
$\obsm_{fix}$ defined by
$$
	\obsm_{fix}(A) := \obsm(A \cap \locsp_{fix})
$$
can be derived purely from
$\obsm$, without knowledge of $\prm$. It follows that
the measure $\obsm_{ord}$ defined by
$$
	\obsm_{ord}(A) := \obsm(A \cap (\locsp \backslash \locsp_{fix}))
$$
can be derived purely from $\obsm$ without knowledge of $\prm$. 
$\obsm_{ord}$ is the same as the observed data measure $\obsm$
but with atoms only at atoms of the ordinary
component of $\prm$ and not at the fixed-location atoms of $\prm$.

Now for any value $\obsw \in \Zplus$, let
$$
	\{\prl_{new,\obsw,1}, \ldots, \prl_{new,\obsw,K_{new,\obsw}}\}
$$
be all of the
locations of atoms of size $\obsw$ in $\obsm_{ord}$.
By \asu{fin}, the number of such atoms, $K_{new,\obsw}$, is finite.
Further let
$\prw_{new,\obsw,k} := \prm(\{\prl_{new,\obsw,k}\})$.
Then the values $\{\prw_{new,\obsw,k}\}_{k=1}^{K_{new,\obsw}}$ are generated
from a thinned Poisson point process with rate measure
\begin{equation} \label{eq:wratem_post}
	\wratem_{\obsw}(d\prw) := \wratem(d\prw) \denslike(\obsw | \prw).
\end{equation}
And since $\wratem_{\obsw}(\mathbb{R}_{+}) < \infty$ by assumption,
each $\prw_{new,\obsw,k}$
has distribution equal to the normalized rate measure in \eq{wratem_post}.
Note that $\prw_{new,\obsw,k} \delta_{\prl_{new,\obsw,k}}$ is a fixed-location
atom in the posterior now that its location is known from the observed $\obsm_{ord}$.

By contrast, if a likelihood draw at an ordinary component atom
in the prior returns a zero, that atom is not observed in $\obsm_{ord}$. 
Such atom weights in $\prm_{post}$ thus form a marked Poisson point process with
rate measure
$$
	\wratem(d\prw) \denslike(0 | \prw),
$$
as was to be shown.
\end{proof}

In \thm{post}, we consider generating $\prm$ and then a single
data point $\obsm$ conditional on $\prm$.
Now suppose we generate $\prm$ and then $N$ data points,
$\obsm_{1}, \ldots, \obsm_{N}$, iid conditional
on $\prm$.
In this case, \thm{post} may be iterated to find the posterior
$\prm | \obsm_{1:N}$. In particular, \thm{post} gives the 
ordinary component and fixed atoms of the random measure
$\prm_{1} := \prm | \obsm_{1}$. Then, using $\prm_{1}$ as the prior
measure
and $\obsm_{2}$ as the data point, another application of
\thm{post} gives $\prm_{2} := \prm | \obsm_{1:2}$. We continue
recursively using $\prm | \obsm_{1:n}$ for $n$ between 1 and $N-1$ as the prior measure until
we find $\prm | \obsm_{1:N}$. The result is made explicit in the following corollary.

\begin{corollary}[Bayesian nonparametric posteriors given multiple data points] \label{cor:post}
	Let $\prm$ be a completely random measure that satisfies \asus{fix} and
	\asuss{inf}; that is, $\prm$ is a CRM with $K_{fix}$ fixed atoms such that
	$K_{fix} < \infty$ and such that the $k$th atom can be written
	$\prw_{fix,k} \delta_{\prl_{fix,k}}$ with
	$$
		\prw_{fix,k} \indep \dfixwk(d\prw)
	$$
	for proper distribution $\dfixwk$ and deterministic $\prl_{fix,k}$.
	Let the ordinary component of $\prm$ have rate measure
	$$
		\fullratem(d\prw \times d\prl) = \wratem(d\prw) \cdot \dordloc(d\prl),
	$$
	where $\dordloc$ is a proper distribution and $\wratem(\obssp) = \infty$.
	Write $\prm = \sum_{k=1}^{\infty} \prw_{k} \delta_{\prl_{k}}$, and let
	$\obsm_1, \ldots, \obsm_n$ be generated conditional on $\prm$ according to 
	$\obsm = \sum_{k=1}^{\infty} \obsw_{n,k} \delta_{\prl_{n,k}}$ with
	$\obsw_{n,k} \indep \denslike(\obsw | \prw_{k})$ for proper, discrete
	probability mass function $\denslike$. And suppose $\obsm_1$ and $\prm$ jointly satisfy
	\asu{fin} so that
	$$
		\sum_{\obsw=1}^{\infty} \int_{\prw \in \obssp} \wratem(d\prw) \denslike(\obsw | \prw) < \infty.
	$$
	It is enough to make the assumption for $\obsm_1$ since the $\obsm_n$ are iid conditional on $\prm$.
	
	Then let $\prm_{post}$ be a random measure with the distribution of $\prm | \obsm_{1:N}$.
	$\prm_{post}$ is a completely random measure with three parts.
	\begin{enumerate}
		\item For each $k \in [K_{fix}]$, $\prm_{post}$ has a fixed-location atom
		at $\prl_{fix,k}$ with weight $\prw_{post,fix,k}$ distributed according to
		the finite-dimensional posterior $\dfixwpostk(d\prw)$
		that comes from prior $\dfixwk$, likelihood
		$\denslike$, and observation
		$\obsm(\{\prl_{fix,k}\})$.
		\item Let $\{\prl_{new,k}: k \in [K_{new}] \}$ be the union of atom
		locations across $\obsm_{1}, \obsm_{2}, \ldots, \obsm_{N}$
		minus the fixed locations in the prior of $\prm$.
		$K_{new}$ is finite.
		Let $\obsw_{new,n,k}$ be the weight of the atom in $\obsm_{n}$
		located at $\prl_{new,k}$. Note that at least one of $\obsw_{new,n,k}$
		across $n$ must be non-zero, but in general $\obsw_{new,n,k}$ may 
		equal zero.
		Then $\prm_{post}$ has a fixed-location atom
		at $\obsw_{new,k}$ with random weight $\prw_{post,new,k}$,
		whose distribution $\dnewwpostk(d\prw)$ is proportional to
		$$
			\wratem(d\prw) \prod_{n=1}^{N} \denslike(\obsw_{new,n,k} | \prw).
		$$
		\item The ordinary component of $\prm_{post}$ has rate measure
		$$
			\wratem_{post,n}(d\prw) := \wratem(d\prw) \left[ \denslike(0 | \prw) \right]^{n}.
		$$
	\end{enumerate}
\end{corollary}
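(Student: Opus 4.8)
The plan is to prove \cor{post} by induction on the number of data points $N$, iterating \thm{post} exactly as sketched in the paragraph preceding the statement. Write $\prm^{(n)}$ for a random measure distributed as $\prm \mid \obsm_{1:n}$, so that the goal is to describe $\prm^{(N)}$. The base case $n = 1$ is precisely \thm{post}. For the inductive step, assuming $\prm^{(n-1)}$ has been shown to have the claimed three-part structure after $n-1$ data points, I would apply \thm{post} with $\prm^{(n-1)}$ in the role of the prior CRM and $\obsm_n$ in the role of the single observation. Because the $\obsm_n$ are iid given $\prm$ and conditioning is associative, $\prm^{(n)}$ is distributed as $\prm^{(n-1)} \mid \obsm_n$, so each reapplication of \thm{post} is legitimate and yields $\prm \mid \obsm_{1:n}$.

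The core of the argument is then bookkeeping: tracking how the law attached to each atom accumulates one likelihood factor per step. For each of the original $K_{fix}$ fixed-location atoms, part~(1) of \thm{post} performs a finite Bayes update with the observation $\obsm_n(\{\prl_{fix,k}\})$ at step $n$; iterating over $n = 1, \ldots, N$ multiplies $\dfixwk(d\prw)$ by $\prod_{n=1}^{N} \denslike(\obsm_n(\{\prl_{fix,k}\}) \mid \prw)$, which is the finite-dimensional posterior of part~(1). For an atom at a genuinely new location $\prl_{new,k}$, suppose it first carries a nonzero weight in $\obsm_m$. At step $m$ this location still lies in the ordinary component of $\prm^{(m-1)}$, whose rate measure is $\wratem(d\prw)[\denslike(0 \mid \prw)]^{m-1}$ by the induction hypothesis; part~(2) of \thm{post} then converts it into a fixed-location atom with weight law proportional to this rate times $\denslike(\obsw_{new,m,k} \mid \prw)$. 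Since $\obsw_{new,n,k} = 0$ for $n < m$, the factor $[\denslike(0 \mid \prw)]^{m-1}$ equals $\prod_{n=1}^{m-1} \denslike(\obsw_{new,n,k} \mid \prw)$, and for each later step $n > m$ part~(1) multiplies in $\denslike(\obsw_{new,n,k} \mid \prw)$. Collecting all factors gives a weight law proportional to $\wratem(d\prw) \prod_{n=1}^{N} \denslike(\obsw_{new,n,k} \mid \prw)$, as in part~(2). Finally, part~(3) thins the surviving ordinary component by one additional factor $\denslike(0 \mid \prw)$ per step, so after $N$ steps its rate measure is $\wratem(d\prw)[\denslike(0 \mid \prw)]^{N}$, which is part~(3).

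The main obstacle is not the accounting but checking that each intermediate $\prm^{(n-1)}$ genuinely satisfies the hypotheses of \thm{post}, so that the theorem may be reapplied. Three verifications are needed. First, $\prm^{(n-1)}$ must have finitely many fixed-location atoms: these are the original $K_{fix}$ together with the new atoms created through step $n-1$, each batch finite by \asu{fin}, so the total is finite. Second, \asu{fin} must persist for the thinned ordinary rate measure; since $\denslike(0 \mid \prw) \in [0,1]$ we have $[\denslike(0 \mid \prw)]^{n-1} \le 1$ pointwise, so $\sum_{\obsw=1}^{\infty} \int \wratem(d\prw)[\denslike(0 \mid \prw)]^{n-1} \denslike(\obsw \mid \prw) \le \sum_{\obsw=1}^{\infty} \int \wratem(d\prw) \denslike(\obsw \mid \prw) < \infty$, so the condition is preserved and in fact only relaxed. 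Third is the infinite-mass condition \asu{inf}. Rather than argue that $\int \wratem(d\prw)[\denslike(0 \mid \prw)]^{n-1} = \infty$ is maintained, I would observe that \asu{inf} enters \thm{post} only to place the model in the Bayesian nonparametric regime, whereas the posterior derivation itself---thinning and marking of the underlying Poisson point process together with finite Bayes on the fixed atoms---is valid for any $\sigma$-finite ordinary rate measure, finite mass or not. Equivalently, one could bypass the iteration altogether by marking each ordinary atom of the prior with its full $N$-tuple of allocations $(\obsw_{1,k}, \ldots, \obsw_{N,k})$: the resulting marked Poisson point process directly separates atoms with an all-zero mark, which survive in the thinned ordinary component, from those with at least one nonzero mark, which become fixed atoms, delivering parts~(2) and~(3) in one shot and sidestepping the infinite-mass subtlety entirely. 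Assembling the accumulated factors then completes the proof.
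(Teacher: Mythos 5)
Your proposal is correct, and its skeleton---induction on $n$, re-applying \thm{post} with $\prm \mid \obsm_{1:(n-1)}$ in the role of the prior, plus the verification that \asus{fix} and \asuss{fin} persist via the pointwise bound $[\denslike(0 \mid \prw)]^{n-1} \le 1$---is exactly the paper's proof. The one genuine divergence is your treatment of \asu{inf}: the paper verifies that it persists directly, arguing that the prior's ordinary component has infinitely many atoms and only finitely many are converted to fixed atoms (by \asu{fin}), so infinitely many ordinary atoms remain and the thinned rate measure $\wratem(d\prw)\left[\denslike(0 \mid \prw)\right]^{n}$ still has infinite mass; you instead bypass the check by observing that the proof of \thm{post}---thinning and marking of the Poisson point process together with finite Bayes on the fixed atoms---never actually uses \asu{inf}. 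Your observation is accurate, but note that it amounts to invoking a strengthened version of \thm{post} justified by inspecting its proof rather than its statement, whereas the paper's two-sentence persistence argument keeps the induction self-contained relative to the theorem as stated. Your alternative one-shot construction---marking each ordinary atom with its full $N$-tuple $(\obsw_{1,k}, \ldots, \obsw_{N,k})$, so that all-zero marks survive as an ordinary component with rate $\wratem(d\prw)\left[\denslike(0 \mid \prw)\right]^{N}$ while atoms with some nonzero mark become fixed atoms, finitely many since $1 - \denslike(0 \mid \prw)^{N} \le N \sum_{\obsw \ge 1} \denslike(\obsw \mid \prw)$ integrates finitely by \asu{fin}---is a clean non-inductive route the paper does not take, and it yields the accumulated weight law $\wratem(d\prw)\prod_{n=1}^{N}\denslike(\obsw_{new,n,k} \mid \prw)$ in one step. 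Your explicit bookkeeping of how each atom's law gains one likelihood factor per data point is also more detailed than the paper's proof, which leaves that accounting implicit in the recursion.
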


\begin{proof}
	\cor{post} follows from recursive application of \thm{post}. In order to recursively apply \thm{post},
	we need to verify that \asus{fix}, \asuss{inf}, and \asuss{fin} hold for the posterior
	$\prm | \obsm_{1:(n+1)}$ when they
	hold for the prior $\prm | \obsm_{1:n}$.
	Note that the number of fixed atoms in the posterior is the number of fixed
	atoms in the prior plus the number of new atoms in the posterior. By \thm{post}, these
	counts
	are both finite as long as $\prm | \obsm_{1:n}$ satisfies \asus{fix} and \asuss{fin},
	which both hold for $n=0$ by assumption and $n > 0$ by the recursive assumption.
	So \asu{fix} holds for $\prm | \obsm_{1:(n+1)}$.
	
	Next we notice that since \asu{inf} implies that there 
	is an infinite number of ordinary component atoms in $\prm | \obsm_{1:n}$ and only finitely
	many become fixed atoms in the posterior by \asu{fin}, it must be that $\prm | \obsm_{1:(n+1)}$
	has infinitely many ordinary component atoms. So \asu{inf} holds for $\prm | \obsm_{1:(n+1)}$.
	
	Finally, we note that
	\begin{align*}
		\lefteqn{ \sum_{\obsw=1}^{\infty} \int_{\prw \in \obssp} \wratem_{post,n}(d\prw) \denslike(\obsw | \prw) } \\
			&= \sum_{\obsw=1}^{\infty} \int_{\prw \in \obssp} \wratem(d\prw) \left[ \denslike(0 | \prw) \right]^{n} \denslike(\obsw | \prw) 
			\le \sum_{\obsw=1}^{\infty} \int_{\prw \in \obssp} \wratem(d\prw) \denslike(\obsw | \prw)
			< \infty,
	\end{align*}
	where the penultimate inequality follows since $\denslike(0 | \prw) \in [0,1]$ and
	where the inequality follows by \asu{fin} on the original $\prm$ (conditioned on no data). So
	\asu{fin} holds for $\prm | \obsm_{1:(n+1)}$.
\end{proof}

We now illustrate the results of the theorem with an example.

\begin{example} \label{ex:bp_nbp}
	Suppose we again start with a beta process prior for $\prm$
	as in \ex{bp_bep}. This time we consider a \emph{negative binomial
	process likelihood} \citep{zhou:2012:beta,broderick:2015:combinatorial}.
	The negative binomial process specifies that,
	given $\prm = \sum_{k=1}^{\infty} \prw_{k} \delta_{\prl_{k}}$,
	we draw $\obsm = \sum_{k=1}^{\infty} \obsw_{k} \delta_{\prl_{k}}$ with
	$$
		\obsw_{k} \indep \negbin( \obsw | r, \prw_{k} ),
	$$
	for some fixed hyperparameter $r > 0$. 
	So
	$$
		\obsm_{n} = \sum_{k=1}^{\infty} \obsw_{n,k} \delta_{\prl_{k}}.
	$$
	In this case, \asu{fin}
	translates into the following restriction.
	\begin{align*}
		\lefteqn{ \sum_{\obsw=1}^{\infty} \int_{\prw \in \obssp} \wratem(d\prw)
			\cdot \denslike(\obsw | \prw) } \\
			&= \int_{\prw \in \obssp}
				\wratem(d\prw)
				\cdot
				\left[ 1 - \denslike(0 | \prw) \right] 
			=\int_{\prw \in (0,1]}
				\massp \prw^{-\discp-1} (1-\prw)^{\concp + \discp - 1} d\prw
				\cdot
				\left[ 1 - (1-\prw)^{r} \right] 
			< \infty,
	\end{align*}
	where the penultimate equality follows since the support of $\wratem(d\prw)$ is $(0,1]$.
	
	By a Taylor expansion, we have $1 - (1-\prw)^{r} = r \prw + o(\prw)$ as $\prw \rightarrow 0$, 
	so we require
	$$
		\int_{\prw \in (0,1]}
				\prw^{1-\discp-1} (1-\prw)^{\concp + \discp - 1} d\prw
			< \infty,
	$$
	which is satisfied if and only if $1-\discp$ and $\concp + \discp$ are the parameters
	of a proper beta distribution. Thus, we have the same parameter restrictions as in
	\eq{bp_params}.

	Now we calculate the posterior given the beta process prior on $\prm$ and
	the negative binomial process likelihood for $\obsm$ conditional on $\prm$.
	In particular, the posterior has the distribution of $\prm_{post}$, a CRM
	with three parts given by \thm{post}.
	
	First, at each fixed atom $\prl_{fix,k}$ of the prior
	with weight $\prw_{fix,k}$ given by \eq{bp_fix},
	there is a fixed atom in the posterior
	with weight $\prw_{post,fix,k}$. Let 
	$\obsw_{post,fix,k} := \obsm(\{\prl_{fix,k}\})$.
	Then $\prw_{post,fix,k}$ has distribution
	\begin{align} \label{eq:bp_bnp_fix_old}
	\begin{split}
		\dfixwpostk(d\prw)
			&\propto F_{fix}(d\prw) \cdot \denslike(\obsw_{post,fix,k} | \prw) \\
			&= \tb(\prw | \rho_{fix,k}, \sigma_{fix,k}) \; d\prw
				\cdot
				\negbin(\obsw_{post,fix,k} | r, \prw ) \\
			&\propto \prw^{\rho_{fix,k} - 1} (1 - \prw)^{\sigma_{fix,k} - 1} \; d\prw
				\cdot
				\prw^{\obsw_{post,fix,k}} (1 - \prw)^{r} \\
			&\propto \tb\left( \prw \left\vert \rho_{fix,k} + \obsw_{post,fix,k},
				\sigma_{fix,k} + r \right. \right) \; d\prw.
	\end{split}
	\end{align}
		
	Second, for any atom $\obsw_{new,k} \delta_{\prl_{new,k}}$ in $\obsm$
	that is not at a fixed location in the prior, $\prm_{post}$ has a fixed atom 
	at $\prl_{new,k}$ whose weight $\prw_{post,new,k}$ has distribution
	\begin{align} \label{eq:bp_bnp_fix_new}
	\begin{split}
		\dnewwpostk(d\prw)
			&\propto \wratem(d\prw) \cdot \denslike(\obsw_{new,k} | \prw) \\
			&= \wratem(d\prw) \cdot \negbin(\obsw_{new,k} | r, \prw ) \\
			&\propto \prw^{-\discp - 1} (1-\prw)^{\concp + \discp - 1} \; d\prw
				\cdot \prw^{\obsw_{new,k}} (1 - \prw)^{r} \\
			&\propto \tb\left( \prw \left\vert -\discp + \obsw_{new,k}, \concp + \discp + r \right. \right) \; d\prw,
	\end{split}
	\end{align}
	which is a proper distribution since we have the following
	restrictions on its parameters. For one, by assumption,
	$\obsw_{new,k} \ge 1$.
	And further, by
	\eq{bp_params}, we have $\discp \in [0,1)$ as well
	as $\concp + \discp > 0$ and $r > 0$.
	
	Third, the ordinary component of $\prm_{post}$ has rate measure
	\begin{align*}
		\wratem(d\prw) \denslike(0 | \prw)
			= \massp \prw^{-\discp - 1} (1-\prw)^{\concp + \discp - 1} \; d\prw
				\cdot (1-\prw)^{r} 
			= \massp \prw^{-\discp - 1} (1-\prw)^{\concp + r + \discp - 1} \; d\prw.
	\end{align*}
	
	Not only have we found the posterior distribution $\prm_{post}$ above, but
	now we can note that the posterior is in the same form as the prior with updated 
	ordinary component hyperparameters:
	\begin{align*}
		\massp_{post} = \massp, \quad 
		\discp_{post} = \discp, \quad
		\concp_{post} = \concp + r.
	\end{align*}
	The posterior also has old and new beta-distributed fixed atoms with beta distribution
	hyperparameters given in \eq{bp_bnp_fix_old}
	and \eq{bp_bnp_fix_new}, respectively.
	Thus, we have proven that the beta process is, in fact,
	conjugate to the negative binomial process. An alternative proof
	was first given by \citet{broderick:2015:combinatorial}.
\end{example}

As in \ex{bp_nbp}, we can use \thm{post} not only to calculate posteriors but also, once those posteriors
are calculated, to check for conjugacy. This approach unifies existing disparate approaches to
Bayesian nonparametric conjugacy.
However, it still requires the practitioner to guess the right conjugate prior for a given likelihood.
In the next section, we define a notion of exponential families
for CRMs, and we show how to automatically 
construct a conjugate prior for any exponential family likelihood.

\section{Exponential families} \label{sec:exp_fam}

Exponential families are what typically make conjugacy so powerful
in the finite case. For one, when a finite likelihood belongs to an exponential family,
then existing results give an automatic conjugate, exponential family prior for that likelihood.
In this section, we review finite exponential families, define
\emph{exponential CRMs}, and show that analogous automatic conjugacy results
can be obtained for exponential CRMs. Our development of exponential
CRMs will also allow particularly straightforward results for
size-biased representations (\cor{size} in \mysec{size}) and 
marginal processes (\cor{marg} in \mysec{marg}).

In the finite-dimensional case, suppose we have some (random) parameter $\prw$
and some (random) observation $\obsw$ whose distribution is conditioned on $\prw$.
We say the distribution $\dexplike$ of $\obsw$ conditional on $\prw$ is in an exponential family if
\begin{align} \label{eq:exp_fam_like}
\begin{split}
	\dexplike(d\obsw | \prw)
		= \densexplike(\obsw | \prw) \; d\obsw
		= \basem(\obsw) \exp\left\{ \langle \natpar(\prw), \suffstat(\obsw) \rangle - \logpart(\prw) \right\} \; \expm(d\obsw),
\end{split}
\end{align}
where $\natpar(\prw)$ is the \emph{natural parameter}, $\suffstat(\obsw)$
is the \emph{sufficient statistic}, $\basem(\obsw)$ is the \emph{base density},
and $\logpart(\prw)$ is the \emph{log partition function}. We denote the
density of $\dexplike$ here, which exists by definition, by $\densexplike$.
The measure $\expm$---with
respect to which the density $\densexplike$ exists---is typically
Lebesgue measure when $\dexplike$ is diffuse or counting measure when
$\dexplike$ is atomic.
 $\logpart(\prw)$ is determined
by the condition that $\dexplike(d\obsw | \prw)$ have unit total mass on
its support.

It is a classic result \citep{diaconis:1979:conjugate}
that the following distribution for $\prw \in \mathbb{R}^{D}$
constitutes a conjugate
prior:
\begin{align} \label{eq:exp_fam_prior}
\begin{split}
	\dexpprior(d\prw)
		= \densexpprior(\prw) \; d\prw
		= \exp\left\{ \langle \xi, \natpar(\prw) \rangle + \lambda \left[ -\logpart(\prw) \right] 
				- \priorlogpart(\xi, \lambda)
			\right\} \; d\prw
			.
\end{split}
\end{align}
$\dexpprior$ is another exponential family distribution, now with natural parameter
$(\xi', \lambda)'$, sufficient statistic $(\natpar(\prw)', -\logpart(\prw))'$,
and log partition function $B(\xi, \lambda)$.
Note that the logarithms of the densities in both \eq{exp_fam_like} and 
\eq{exp_fam_prior} are linear in $\natpar(\prw)$ and $-\logpart(\prw)$.
So, by Bayes Theorem,
the posterior $\dexppost$ also has these quantities as sufficient statistics in $\prw$,
and we can see $\dexppost$ must have the following form.
\begin{align} \label{eq:exp_fam_post}
\begin{split}
	\lefteqn{ \dexppost(d\prw | \obsw)
		= \densexppost(\prw | \obsw) \; d\prw } \\
		&= \exp\left\{ \langle \xi + \suffstat(\obsw), \natpar(\prw) \rangle
		+ (\lambda + 1) \left[ -\logpart(\prw) \right] 
				- B(\xi + \suffstat(\obsw), \lambda + 1)
			\right\} \; d\prw.
\end{split}
\end{align}
Thus we see that $\dexppost$ belongs to the same exponential
family as $\dexpprior$ in \eq{exp_fam_prior}, and 
hence $\dexpprior$ is a conjugate prior for 
$\dexplike$ in \eq{exp_fam_like}.

\subsection{Exponential families for completely random measures} \label{sec:exp_fam_crm}

In the finite-dimensional case, we saw that for any exponential family likelihood,
as in \eq{exp_fam_like}, we can always construct a conjugate
exponential family prior, given by \eq{exp_fam_prior}.

In order to prove a similar result for CRMs, we start by defining a notion
of exponential families for CRMs.
\begin{definition} \label{def:exp_fam_crm}
	We say that a CRM $\prm$ is an \emph{exponential CRM}
	if it has the following two parts. First, let $\prm$ have $K_{fix}$ fixed-location
	atoms, where $K_{fix}$ may be finite or infinite. The $k$th fixed-location atom
	is located at any $\prl_{fix,k}$, unique from the other fixed locations,
	and has random weight $\prw_{fix,k}$, whose distribution has density
	$\densfixwk$:
	$$
		\densfixwk(\prw)
			= \basem(\prw) \exp\left\{ \langle \natpar(\zeta_{k}), \suffstat(\prw) \rangle 
				- \logpart(\zeta_{k}) \right\},
	$$
	for some base density $\basem$, natural parameter function $\natpar$,
	sufficient statistic $\suffstat$, and log partition function
	$\logpart$ shared across atoms. Here, $\zeta_{k}$ is an
	atom-specific parameter.
	
	Second, let $\prm$ have an ordinary component with rate measure
	$\fullratem(d\prw \times d\prl) = \wratem(d\prw) \cdot \dordloc(d\prl)$
	for some proper distribution $\dordloc$ and weight rate measure $\wratem$
	of the form
	$$
		\wratem(d\prw) = \massp \exp\left\{ \langle \natpar(\zeta), \suffstat(\prw) \rangle \right\}.
	$$
	In particular, $\natpar$ and $\suffstat$ are shared with the fixed-location atoms, and fixed hyperparameters
	$\massp$ and $\zeta$ are unique to the ordinary component.
\end{definition}

\subsection{Automatic conjugacy for completely random measures} \label{sec:auto_conj}

With \mydef{exp_fam_crm} in hand, we can specify an automatic Bayesian
nonparametric conjugate prior 
for an exponential CRM likelihood.

\begin{theorem}[Automatic conjugacy] \label{thm:auto_conj}
	Let $\prm = \sum_{k=1}^{\infty} \prw_{k} \delta_{\prl_{k}}$,
	in accordance with \asu{inf}.
	Let $\obsm$ be generated conditional on $\prm$
	according to an exponential CRM with 
	fixed-location atoms at $\{\prl_{k}\}_{k=1}^{\infty}$
	and no ordinary component. 
	In particular, the distribution of the weight $\obsw_{k}$
	at $\prl_{k}$
	of $\obsm$
	has the following density conditional on the weight
	$\prw_{k}$ at $\prl_{k}$ of $\prm$:
	$$
		\denslike(\obsw | \prw_{k})
			= \basem(\obsw) \exp\left\{ \langle \natpar(\prw_{k}), \suffstat(\obsw) \rangle 
				- \logpart(\prw_{k}) \right\}.
	$$
	
	Then a conjugate prior for $\prm$
	is the following exponential CRM distribution.
	First, let $\prm$ have $K_{prior,fix}$ fixed-location atoms,
	in accordance with \asu{fix}. The $k$th such
	atom has random weight $\prw_{fix,k}$ with proper density
	$$
		\densfixwpriork(\prw)
			= \exp\left\{ \langle \xi_{fix,k}, \natpar(\prw) \rangle + \lambda_{fix,k} 
				\left[ -\logpart(\prw) \right] - \priorlogpart(\xi_{fix,k}, \lambda_{fix,k}) \right\},
	$$
	where $(\eta', -A)'$ here is the sufficient statistic and $B$ is the
	log partition function. $\xi_{fix,k}$ and $\lambda_{fix,k}$ are fixed hyperparameters
	for this atom weight.
	
	Second, let $\prm$ have ordinary component characterized by any proper
	distribution $\dordloc$ and weight rate measure
	$$
		\wratem(d\prw) = \massp \exp\left\{ \langle \xi, \natpar(\prw) \rangle
			+ \lambda \left[ -\logpart(\prw) \right] \right\},
	$$
	where $\massp$, $\xi$, and $\lambda$ are fixed hyperparameters
	of the weight rate measure chosen to satisfy \asus{inf} and \asuss{fin}.
\end{theorem}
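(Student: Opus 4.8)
The plan is to apply \thm{post} to compute the posterior $\prm_{post}$ of $\prm$ given $\obsm$ as a CRM with three parts, and then to verify that each part is again of the exponential CRM form prescribed for the prior, identifying the updated hyperparameters explicitly. The single structural fact driving every step is that $\log \denslike(\obsw \mid \prw)$ is affine in the pair $(\natpar(\prw), -\logpart(\prw))$, and that this same pair is precisely the sufficient statistic of $\prw$ appearing both in the fixed-atom densities $\densfixwpriork$ and in the ordinary-component rate measure $\wratem$. Multiplying the likelihood against either exponential form therefore merely shifts the natural parameters $(\xi, \lambda)$, leaving the functional form intact.

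First I would dispatch the old fixed-location atoms, which are part~1 of \thm{post}. For each $k \in [K_{prior,fix}]$, the posterior weight has the finite-dimensional distribution obtained from prior $\densfixwpriork$, likelihood $\denslike$, and observation $\obsw := \obsm(\{\prl_{fix,k}\})$. But $\densfixwpriork$ is exactly the finite conjugate prior of \eq{exp_fam_prior} and $\denslike$ is the exponential family likelihood of \eq{exp_fam_like}, so the classical computation of \mysec{exp_fam} applies without change: the posterior is the exponential family of \eq{exp_fam_post}, i.e.\ the same family as the prior with $\xi_{fix,k} \mapsto \xi_{fix,k} + \suffstat(\obsw)$ and $\lambda_{fix,k} \mapsto \lambda_{fix,k} + 1$. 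This is an exponential CRM fixed atom of the required form.

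Next I would handle parts~2 and~3, both arising from the ordinary component, by direct substitution. For a new fixed atom (part~2), the weight density is proportional to $\wratem(d\prw)\,\denslike(\obsw_{new,k} \mid \prw)$; expanding both exponential forms and using symmetry of the pairing to rewrite $\langle \natpar(\prw), \suffstat(\obsw_{new,k}) \rangle = \langle \suffstat(\obsw_{new,k}), \natpar(\prw) \rangle$, the product collapses (up to a multiplicative constant $\massp\,\basem(\obsw_{new,k})$) to $\exp\{ \langle \xi + \suffstat(\obsw_{new,k}), \natpar(\prw) \rangle + (\lambda+1)[-\logpart(\prw)] \}$, which after normalization is $\densfixwpriork$ with $\xi \mapsto \xi + \suffstat(\obsw_{new,k})$ and $\lambda \mapsto \lambda + 1$. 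For the posterior ordinary component (part~3), the rate measure $\wratem(d\prw)\,\denslike(0 \mid \prw)$ reduces by the identical calculation to $\massp\,\basem(0)\,\exp\{ \langle \xi + \suffstat(0), \natpar(\prw) \rangle + (\lambda+1)[-\logpart(\prw)] \}$, again of ordinary-component exponential CRM form with updated mass $\massp \mapsto \massp\,\basem(0)$, and shifts $\xi \mapsto \xi + \suffstat(0)$, $\lambda \mapsto \lambda + 1$. Thus all three parts lie in the same exponential CRM family as the prior, establishing conjugacy.

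I expect the only real obstacle to be notational bookkeeping, since $\natpar$ serves as the natural parameter of the likelihood (paired with $\suffstat(\obsw)$) but as a sufficient-statistic coordinate of the prior (paired with $\xi$), and dually $-\logpart(\prw)$ is a sufficient-statistic coordinate of the prior while being the likelihood's log-partition function; keeping these dual roles straight is what makes the parameter shifts line up. I would also remark that the argument only certifies that the posterior shares the prior's family, which is all conjugacy demands; that the posterior ordinary component continues to satisfy \asus{inf} and \asuss{fin}---and hence that the recipe may be iterated over multiple observations---follows exactly as in the proof of \cor{post}.
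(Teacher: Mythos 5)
Your proposal is correct and takes essentially the same route as the paper's proof: apply \thm{post}, substitute the exponential CRM forms into each of the three posterior components, and read off the updated hyperparameters ($\massp_{post} = \massp\,\basem(0)$, the shifts of $\xi$ by the relevant sufficient statistic, and $\lambda \mapsto \lambda + 1$). Your one small variation---invoking the finite-dimensional conjugacy computation of \mysec{exp_fam} for the old fixed atoms rather than re-expanding the product---is the identical calculation the paper carries out explicitly.
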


\begin{proof}
To prove the conjugacy of the prior for $\prm$ with the likelihood for $\obsm$,
we calculate the posterior distribution of $\prm | \obsm$ using \thm{post}. Let $\prm_{post}$
be a CRM with the distribution of $\prm | \obsm$. Then, by \thm{post}, $\prm_{post}$ has
the following three parts.

First, at any fixed location $\prl_{fix,k}$ in the prior, let $\obsw_{fix,k}$
be the value of $\obsm$ at that location. Then $\prm_{post}$ has a fixed-location
atom at $\prl_{fix,k}$, and its weight $\prw_{post,fix,k}$ has distribution 
\begin{align*}
	\dfixwpostk(d\prw) 
		&\propto \densfixwpriork(\prw) \; d\prw \cdot \denslike(\obsw_{fix,k} | \prw) \\
		&\propto \exp\left\{ \langle \xi_{fix,k}, \natpar(\prw) \rangle + \lambda_{fix,k} 
				\left[ -\logpart(\prw) \right] \right\} \; d\prw
			\cdot
				\exp\left\{ \langle \natpar(\prw), \suffstat(\obsw_{fix,k}) \rangle 
				- \logpart(\prw) \right\} \; d\prw \\
		&= \exp\left\{ \langle \xi_{fix,k} + \suffstat(\obsw_{fix,k}), \natpar(\prw) \rangle
			+ (\lambda_{fix,k} + 1) \left[ -\logpart(\prw) \right] \right\} \; d\prw.
\end{align*}
It follows, from putting in the normalizing constant,
that the distribution of $\prw_{post,fix,k}$ has density
\begin{align*}
	\densfixwpostk(\prw)
		&= \exp\left\{ \langle \xi_{fix,k} + \suffstat(\obsw_{fix,k}), \natpar(\prw) \rangle
			+ (\lambda_{fix,k} + 1) \left[ -\logpart(\prw) \right] \right. \\
		& \left. {} 
			- \priorlogpart(\xi_{fix,k} + \suffstat(\obsw_{fix,k}), \lambda_{fix,k} + 1) \right\}.
\end{align*}

Second, for any atom $\obsw_{new,k} \delta_{\prl_{new,k}}$ in $\obsm$
that is not at a fixed location in the prior, $\prm_{post}$ has a fixed
atom at $\prl_{new,k}$ whose weight $\prw_{post,new,k}$ has distribution
\begin{align*}
	\dnewwpostk(\prw)
		&\propto \wratem(d\prw) \cdot \denslike(\obsw_{new,k} | \prw) \\
		&\propto \exp\left\{ \langle \xi, \natpar(\prw) \rangle
				+ \lambda \left[ -\logpart(\prw) \right] \right\}
			\cdot
			\exp\left\{ \langle \natpar(\prw), \suffstat(\obsw_{new,k}) \rangle 
				- \logpart(\prw) \right\} \; d\prw \\
		&= \exp\left\{ \langle \xi + \suffstat(\obsw_{new,k}), \natpar(\prw) \rangle
			+ (\lambda + 1) \left[ -\logpart(\prw) \right] \right\} \; d\prw
\end{align*}
and hence density
\begin{align*}
	\densnewwpostk(\prw)
		&= \exp\left\{ \langle \xi + \suffstat(\obsw_{new,k}), \natpar(\prw) \rangle
			+ (\lambda + 1) \left[ -\logpart(\prw) \right] 
			- \priorlogpart(\xi + \suffstat(\obsw_{new,k}), \lambda + 1) \right\}.
\end{align*}

Third, the ordinary component of $\prm_{post}$ has weight rate measure
\begin{align*}
	\lefteqn{ \wratem(d\prw) \cdot \denslike(0 | \prw) } \\
		&= \massp \exp\left\{ \langle \xi, \natpar(\prw) \rangle
				+ \lambda \left[ -\logpart(\prw) \right] \right\}
			\cdot 
			\basem(0) \exp\left\{ \langle \natpar(\prw), \suffstat(0) \rangle 
				- \logpart(\prw) \right\} \\
		&= \massp \basem(0) \cdot \exp\left\{ \langle \xi + \suffstat(0), \natpar(\prw) \rangle
				+ (\lambda + 1) \left[ -\logpart(\prw) \right] \right\}.
\end{align*}

Thus, the posterior rate measure is in the same exponential CRM form
as the prior rate measure with
updated hyperparameters:
\begin{align*}
	\massp_{post} = \massp \basem(0), \quad
	\xi_{post} = \xi + \suffstat(0), \quad
	\lambda_{post} = \lambda + 1.
\end{align*}
Since we see that the posterior fixed-location atoms are likewise in the same
exponential CRM form as the prior,
we have shown that conjugacy holds, as desired.
\end{proof}

We next use \thm{auto_conj} to give proofs of conjugacy
in cases where conjugacy has not previously been established in the Bayesian
nonparametrics literature. 

\begin{example} \label{ex:plp_gp_auto_conj}
	Let $\obsm$ be generated according to a \emph{Poisson likelihood
	process}\footnote{We use the term ``Poisson likelihood process''
	to distinguish this specific Bayesian nonparametric likelihood from
	the Poisson point process.}
	conditional on $\prm$. That is,
	$
		\obsm = \sum_{k=1}^{\infty} \obsw_{k} \delta_{\prl_{k}}
	$
	conditional on
	$\prm = \sum_{k=1}^{\infty} \prw_{k} \delta_{\prl_{k}}$
	has an exponential CRM distribution with only a
	fixed-location component.
	The weight $\obsw_{k}$ at location $\prl_{k}$
	has support on $\Zstar$ and has a Poisson density
	with parameter $\prw_{k} \in \obssp$:
	\begin{align} \label{eq:plp_dens}
	\begin{split}
		\denslike(\obsw | \prw_{k})
			= \frac{1}{\obsw !} \prw_{k}^{\obsw} e^{-\prw_{k}}
			= \frac{1}{\obsw !} \exp\left\{ \obsw \log(\prw_{k}) - \prw_{k} \right\}
			.
	\end{split}
	\end{align}
	The final line is rewritten to emphasize the exponential family form
	of this density, with
	\begin{align*}
		\basem(\obsw) = \frac{1}{\obsw !}, \quad
		\suffstat(\obsw) = \obsw, \quad
		\natpar(\prw) = \log(\prw), \quad
		\logpart(\prw) = \prw.
	\end{align*}
	By \thm{auto_conj}, this Poisson likelihood process has a Bayesian nonparametric
	conjugate prior for $\prm$ with two parts.
	
	First, $\prm$ has a set of $K_{prior,fix}$ fixed-location atoms,
	where $K_{prior,fix} < \infty$ by \asu{fix}. The $k$th
	such atom has random weight $\prw_{fix,k}$ with density
	\begin{align}
		\densfixwpriork(\prw)
			\nonumber
			&= \exp\left\{ \langle \xi_{fix,k}, \natpar(\prw) \rangle + \lambda_{fix,k} 
				\left[ -\logpart(\prw) \right] - \priorlogpart(\xi_{fix,k}, \lambda_{fix,k}) \right\} \\
			\nonumber
			&= \prw^{\xi_{fix,k}} e^{-\lambda_{fix,k} \prw}
				\exp\left\{- \priorlogpart(\xi_{fix,k}, \lambda_{fix,k}) \right\} \\
			\label{eq:gp_fixed_atom}
			&= \ga(\prw \left\vert \xi_{fix,k} + 1, \lambda_{fix,k} \right. ),
	\end{align}
	where $\ga(\prw | a, b)$ denotes the gamma density with shape parameter
	$a > 0$ and rate parameter $b > 0$. So we must have fixed hyperparameters
	$\xi_{fix,k} > -1$ and $\lambda_{fix,k} > 0$.
	Further,
	$$
		\exp\left\{- \priorlogpart(\xi_{fix,k}, \lambda_{fix,k}) \right\}
			= \lambda_{fix,k}^{\xi_{fix,k} + 1} / \Gamma(\xi_{fix,k} + 1)
	$$
	to ensure normalization.
	
	Second, $\prm$ has an ordinary component characterized by any proper distribution
	$\dordloc$ and weight rate measure
	\begin{align}
		\wratem(d\prw)
			= \massp \exp\left\{ \langle \xi, \natpar(\prw) \rangle
				+ \lambda \left[ -\logpart(\prw) \right] \right\} \; d\prw
			\label{eq:gp_ord}
			= \massp \prw^{\xi} e^{-\lambda \prw} \; d\prw.
	\end{align}
	Note that \thm{auto_conj} guarantees that the weight rate measure
	will have the same distributional kernel in $\prw$ as the fixed-location atoms.
	
	Finally, we need to choose the allowable hyperparameter ranges for $\massp$,
	$\xi$, and $\lambda$. First, $\massp > 0$ to ensure $\wratem$ is a measure.
	By \asu{inf}, we must have $\wratem(\obssp) = \infty$, so $\wratem$ 
	must represent an improper gamma distribution. As such, we require
	either $\xi + 1 \le 0$ or $\lambda \le 0$.
	By \asu{fin}, we must have
	\begin{align*}
		\sum_{\obsw=1}^{\infty} \int_{\prw \in \obssp}
			\wratem(d\prw) \cdot \denslike(\obsw | \prw) 
			= \int_{\prw \in \obssp} \wratem(d\prw) \cdot [1 - \denslike(0 | \prw)]
			= \int_{\prw \in \obssp}
				\massp \prw^{\xi} e^{-\lambda \prw} \; d\prw
				\cdot \left[1 - e^{-\prw} \right]
			< \infty.
	\end{align*}
	To ensure the integral over $[1,\infty)$ is finite, we must have $\lambda > 0$. To ensure
	the integral over $(0,1)$ is finite, we note that $1 - e^{-\prw} = \prw + o(\prw)$
	as $\prw \rightarrow 0$. So we require
	$$
		\int_{\prw \in (0,1)}
				\massp \prw^{\xi + 1} e^{-\lambda \prw} \; d\prw
			< \infty,
	$$
	which is satisfied if and only if $\xi + 2 > 0$.
	
	Finally, then the hyperparameter restrictions can be summarized as:
	\begin{align*}
		\massp > 0, \quad
		\xi \in (-2,-1], \quad
		\lambda > 0; \quad
		\xi_{fix,k} > -1 \textrm{ and } \lambda_{fix,k} > 0
			\quad \textrm{for all $k \in [K_{prior,fix}]$}.
	\end{align*}
	
	The ordinary component of the conjugate prior for $\prm$ discovered in this example
	is typically called a \emph{gamma process}. Here, we have for the first time specified
	the distribution of the fixed-location atoms of the gamma process and, also for the first
	time, proved that the gamma process is conjugate to the 
	Poisson likelihood process. 
	We highlight this result as a corollary to \thm{auto_conj}.
	
	\begin{corollary}
	Let the Poisson likelihood process be a CRM with fixed-location
	atom weight distributions as in \eq{plp_dens}.
	Let the gamma process be a CRM with fixed-location atom weight distributions
	as in \eq{gp_fixed_atom} and ordinary component weight measure as in
	\eq{gp_ord}. Then the gamma process is a conjugate Bayesian nonparametric
	prior for the Poisson
	likelihood process.
	\end{corollary}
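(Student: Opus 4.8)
The plan is to treat this statement as a direct specialization of \thm{auto_conj} to the Poisson likelihood: the theorem already does the heavy lifting of producing a conjugate exponential CRM prior for any exponential CRM likelihood, so the entire content of the corollary reduces to (i) verifying that the Poisson likelihood process is an exponential CRM in the sense of \mydef{exp_fam_crm}, and (ii) recognizing the prior that the theorem outputs as precisely the gamma process described in the statement. The only genuinely delicate point will be confirming admissibility, i.e.\ that the Bayesian nonparametric conditions \asus{inf} and \asuss{fin} can be met simultaneously.

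First I would confirm the exponential-family form of the per-atom likelihood. Rewriting the Poisson mass function in \eq{plp_dens} as $\denslike(\obsw | \prw_k) = (\obsw!)^{-1}\exp\{\obsw \log \prw_k - \prw_k\}$ exhibits base density $\basem(\obsw) = 1/\obsw!$, sufficient statistic $\suffstat(\obsw) = \obsw$, natural parameter $\natpar(\prw) = \log \prw$, and log partition function $\logpart(\prw) = \prw$. This places the likelihood squarely in the form required by the hypothesis of \thm{auto_conj}.

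Next I would invoke \thm{auto_conj} directly. The theorem returns a conjugate exponential CRM prior whose fixed-location atom weights have density proportional to $\exp\{\langle \xi_{fix,k}, \natpar(\prw)\rangle + \lambda_{fix,k}[-\logpart(\prw)]\}$ and whose ordinary component carries weight rate measure $\massp\exp\{\langle \xi, \natpar(\prw)\rangle + \lambda[-\logpart(\prw)]\}$. Substituting $\natpar(\prw) = \log\prw$ and $\logpart(\prw) = \prw$ collapses the fixed-atom density to $\prw^{\xi_{fix,k}} e^{-\lambda_{fix,k}\prw}$, i.e.\ the gamma density $\ga(\prw | \xi_{fix,k}+1, \lambda_{fix,k})$ of \eq{gp_fixed_atom}, and collapses the ordinary-component kernel to $\massp \prw^{\xi} e^{-\lambda\prw}$, the gamma process weight measure of \eq{gp_ord}. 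Hence the prior produced by the theorem is exactly the gamma process, which is what must be shown.

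The remaining step, and the only one requiring real care, is to check that this gamma process is an admissible prior at all, as carried out in the computation above. Here \asu{inf} forces the ordinary measure $\massp\prw^\xi e^{-\lambda\prw}\,d\prw$ to be improper, while \asu{fin} requires $\int_{\obssp}\massp\prw^\xi e^{-\lambda\prw}(1-e^{-\prw})\,d\prw < \infty$. Convergence at infinity demands $\lambda > 0$, and the small-$\prw$ expansion $1-e^{-\prw} = \prw + o(\prw)$ reduces convergence near the origin to that of a gamma integral with shape $\xi+2$, forcing $\xi + 2 > 0$. These constraints carve out the nonempty admissible range $\massp > 0$, $\xi \in (-2,-1]$, $\lambda > 0$ (with $\xi_{fix,k} > -1$ and $\lambda_{fix,k} > 0$ for the fixed atoms), so the gamma process is a bona fide Bayesian nonparametric conjugate prior for the Poisson likelihood process, completing the argument.
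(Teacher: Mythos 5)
Your proposal is correct and takes essentially the same route as the paper: the paper's own derivation in \ex{plp_gp_auto_conj} likewise identifies the exponential-family components $\basem(\obsw)=1/\obsw!$, $\suffstat(\obsw)=\obsw$, $\natpar(\prw)=\log\prw$, $\logpart(\prw)=\prw$, invokes \thm{auto_conj} to read off the fixed-atom gamma densities of \eq{gp_fixed_atom} and the ordinary-component weight rate measure of \eq{gp_ord}, and then derives the hyperparameter ranges $\massp>0$, $\xi\in(-2,-1]$, $\lambda>0$ from \asus{inf} and \asuss{fin} via the same Taylor-expansion argument near the origin. No gaps; your admissibility check matches the paper's step for step.
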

\end{example}

\begin{example}
	Next, let $\obsm$ be generated according to a new process we call
	an \emph{odds Bernoulli process}. We have previously seen a typical
	Bernoulli process likelihood in \ex{bp_bep}. In the odds Bernoulli process,
	we say that $\obsm$, conditional on $\prm$, has an exponential CRM
	distribution.
	In this case, the weight of the $k$th atom, $\obsw_{k}$, conditional on
	$\prw_{k}$
	has support on $\{0,1\}$ and has a Bernoulli density
	with odds parameter $\prw_{k} \in \obssp$:
	\begin{align} \label{eq:bern_exp}
	\begin{split}
		\denslike(\obsw | \prw_{k})
			&= \prw_{k}^{\obsw} (1 + \prw_{k})^{-1} \\
			&= \exp\left\{ \obsw \log(\prw_{k}) - \log(1 + \prw_{k}) \right\}
			.
	\end{split}
	\end{align}
	That is, if $\rho$ is the probability of a successful Bernoulli draw,
	then $\prw = \rho/(1 - \rho)$ represents the odds ratio of the probability of
	success over the probability of failure. 
	
	The final line of \eq{bern_exp}
	is written to emphasize the exponential family form of this density, with
	\begin{align*}
		\basem(\obsw) = 1, \quad
		\suffstat(\obsw) = \obsw, \quad
		\natpar(\prw) = \log(\prw), \quad
		\logpart(\prw) = \log(1 + \prw).
	\end{align*}
	By \thm{auto_conj}, the likelihood for $\obsm$ has a Bayesian nonparametric conjugate
	prior for $\prm$.
	This conjugate prior has two parts.
	
	First, $\prm$ has a set of $K_{prior,fix}$ fixed-location atoms. The $k$th
	such atom has random weight $\prw_{fix,k}$ with density
	\begin{align}
		\densfixwpriork(\prw)
			\nonumber
			&= \exp\left\{ \langle \xi_{fix,k}, \natpar(\prw) \rangle + \lambda_{fix,k} 
				\left[ -\logpart(\prw) \right] - \priorlogpart(\xi_{fix,k}, \lambda_{fix,k}) \right\} \\
			\nonumber
			&= \prw^{\xi_{fix,k}} (1+\prw)^{-\lambda_{fix,k}}
				\exp\left\{- \priorlogpart(\xi_{fix,k}, \lambda_{fix,k}) \right\} \\
			\label{eq:bp_fixed_atom}
			&= \tbp\left(\prw \left \vert \xi_{fix,k} + 1, \lambda_{fix,k} - \xi_{fix,k} - 1 \right. \right),
	\end{align}
	where $\tbp(\prw | a, b)$ denotes the beta prime density with shape
	parameters $a > 0$ and $b > 0$.
	Further,
	$$
		\exp\left\{- \priorlogpart(\xi_{fix,k}, \lambda_{fix,k}) \right\}
			= \frac{
					\Gamma(\lambda_{fix,k})
				}{
					\Gamma(\xi_{fix,k} + 1)
					\Gamma(\lambda_{fix,k} - \xi_{fix,k} - 1)
				}
	$$
	to ensure normalization.
	
	Second, $\prm$ has an ordinary component characterized by any proper distribution
	$\dordloc$ and weight rate measure
	\begin{align}
		\wratem(d\prw)
			= \massp \exp\left\{ \langle \xi, \natpar(\prw) \rangle
				+ \lambda \left[ -\logpart(\prw) \right] \right\} \; d\prw
			\label{eq:bp_ord_meas}
			= \massp \prw^{\xi} (1+\prw)^{-\lambda} \; d\prw.
	\end{align}
	
	We need to choose the allowable hyperparameter ranges for $\massp$,
	$\xi$, and $\lambda$. First, $\massp > 0$ to ensure $\wratem$ is a measure.
	By \asu{inf}, we must have $\wratem(\obssp) = \infty$, so $\wratem$ 
	must represent an improper beta prime distribution. As such, we require
	either $\xi + 1 \le 0$ or $\lambda - \xi - 1 \le 0$.
	By \asu{fin}, we must have
	\begin{align*}
		\lefteqn{ \sum_{\obsw=1}^{\infty} \int_{\prw \in \obssp}
			\wratem(d\prw) \cdot \denslike(\obsw | \prw) 
			= \int_{\prw \in \obssp} \wratem(d\prw) \cdot \denslike(1 | \prw) } \\
			& \quad \textrm{since the support of $\obsw$ is $\{0,1\}$} \\
			&= \int_{\prw \in \obssp}
				\massp \prw^{\xi} (1+\prw)^{-\lambda} \; d\prw
				\cdot
				\prw^{1} (1 + \prw)^{-1}
			= \massp \int_{\prw \in \obssp}
				\prw^{\xi + 1} (1+\prw)^{-\lambda - 1} \; d\prw
			< \infty.
	\end{align*}
	Since the integrand is the kernel of a beta prime distribution, we simply require
	that this distribution be proper; i.e., $\xi + 2 > 0$ and $\lambda - \xi - 1 > 0$.
	
	The hyperparameter restrictions can be summarized as:
	\begin{align*}
		\massp > 0,
		\xi \in (-2,-1],
		\lambda > \xi + 1; 
		\xi_{fix,k} > -1 \textrm{ and } \lambda_{fix,k} > \xi_{fix,k} + 1
			\textrm{ for all $k \in [K_{prior,fix}]$}.
	\end{align*}

	We call the distribution for $\prm$ described in this example the 
	\emph{beta prime process}. Its ordinary component has previously been defined by
	\citet{broderick:2015:combinatorial}.
	But this result represents the first time the beta prime process
	is described in full, including parameter
	restrictions and fixed-location atoms, as well as the first proof
	of its conjugacy with the odds Bernoulli process. 
	We highlight the latter result as a corollary to \thm{auto_conj} below.
	
	\begin{corollary}
	Let the odds Bernoulli process be a CRM with fixed-location
	atom weight distributions as in \eq{bern_exp}.
	Let the beta process be a CRM with fixed-location atom weight distributions
	as in \eq{bp_fixed_atom} and ordinary component weight measure as in
	\eq{bp_ord_meas}. Then the beta process is a conjugate Bayesian nonparametric
	prior for the odds Bernoulli process.
	\end{corollary}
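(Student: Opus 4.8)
The plan is to obtain the corollary as an immediate specialization of \thm{auto_conj}. First I would confirm that the odds Bernoulli process qualifies as an exponential CRM likelihood in the precise sense required by that theorem: reading off \eq{bern_exp}, the conditional weight density factors as $\denslike(\obsw | \prw_k) = \basem(\obsw) \exp\{\langle \natpar(\prw_k), \suffstat(\obsw)\rangle - \logpart(\prw_k)\}$ with $\basem(\obsw) = 1$, $\suffstat(\obsw) = \obsw$, $\natpar(\prw) = \log(\prw)$, and $\logpart(\prw) = \log(1+\prw)$. Since $\obsm$ is specified as a CRM with only a fixed-location component whose atoms sit at the atoms of $\prm$, the structural hypotheses of \thm{auto_conj} are met, and it remains only to read off the form of the conjugate prior it guarantees.

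Next I would invoke \thm{auto_conj} directly. It asserts that a conjugate prior exists whose fixed-location atom weights have density proportional to $\exp\{\langle \xi_{fix,k}, \natpar(\prw)\rangle + \lambda_{fix,k}[-\logpart(\prw)]\}$ and whose ordinary component has weight rate measure $\wratem(d\prw) = \massp \exp\{\langle \xi, \natpar(\prw)\rangle + \lambda[-\logpart(\prw)]\}\,d\prw$. Substituting the specific $\natpar$ and $\logpart$ identified above turns each of these kernels into $\prw^{\xi}(1+\prw)^{-\lambda}$, which is precisely the unnormalized beta prime kernel. Matching exponents then identifies the fixed-atom density with $\tbp(\prw | \xi_{fix,k}+1, \lambda_{fix,k} - \xi_{fix,k} - 1)$ as in \eq{bp_fixed_atom} and the ordinary component with the rate measure in \eq{bp_ord_meas}, so the prior produced by \thm{auto_conj} is exactly the beta prime process.

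Finally I would close the loop by checking that the hyperparameter ranges demanded by \asus{inf} and \asuss{fin} are consistent with a well-defined beta prime process. Applying \asu{inf} forces $\wratem$ to be an improper beta prime kernel, while \asu{fin} applied to the odds Bernoulli likelihood (where $\obsw \in \{0,1\}$, so only the $\obsw = 1$ term contributes) reduces to requiring that $\prw^{\xi+1}(1+\prw)^{-\lambda-1}$ be integrable, i.e. $\xi + 2 > 0$ and $\lambda - \xi - 1 > 0$. These match the ranges already recorded in the example. I do not anticipate a genuine obstacle: the entire content is the recognition that the exponential-family kernel $\prw^{\xi}(1+\prw)^{-\lambda}$ coincides with the beta prime kernel, so the only care needed is the routine bookkeeping of the shape-parameter reparametrization and the properness checks just described.
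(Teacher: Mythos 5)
Your proposal is correct and follows essentially the same route as the paper: writing the odds Bernoulli density in exponential family form with $\natpar(\prw) = \log(\prw)$ and $\logpart(\prw) = \log(1+\prw)$, invoking \thm{auto_conj} to read off the conjugate prior, recognizing the resulting kernel $\prw^{\xi}(1+\prw)^{-\lambda}$ as beta prime, and deriving the hyperparameter ranges from \asus{inf} and \asuss{fin} exactly as the paper does (including the reduction of \asu{fin} to the $\obsw = 1$ term and the conditions $\xi + 2 > 0$, $\lambda - \xi - 1 > 0$). No gaps; your identification of the prior as the beta prime process is also the correct reading of the corollary, whose reference to ``the beta process'' is a slip in nomenclature given that \eq{bp_fixed_atom} and \eq{bp_ord_meas} are beta prime kernels.
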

\end{example}

\section{Size-biased representations} \label{sec:size}

We have shown in \mysec{auto_conj} that 
our exponential CRM (\mydef{exp_fam_crm}) is useful in that we
can find an automatic
Bayesian nonparametric conjugate prior given an exponential
CRM likelihood. We will see in this section and the next
that exponential CRMs allow us to build representations
that allow tractable inference despite the infinite-dimensional
nature of the models we are using.

The best-known size-biased representation of a random measure
in Bayesian nonparametrics is the \emph{stick-breaking} representation
of the Dirichlet process $\prm_{DP}$ \citep{sethuraman:1994:constructive}:
\begin{align} \label{eq:dp_stick}
\begin{split}
	\prm_{DP} &= \sum_{k=1}^{\infty} \prw_{DP, k} \delta_{\prl_{k}}; \\
	\textrm{ For $k \in \Zstar$, }
		\prw_{DP,k} &= \beta_{k} \prod_{j=1}^{k-1} (1-\beta_{j}), \quad
		\beta_{k} \iid \tb(1, \concp), \quad
		\prl_{k} \iid \dordloc,
\end{split}
\end{align}
where $\concp$ is a fixed hyperparameter satisfying $\concp > 0$. 

The name ``stick-breaking'' originates from thinking of the unit interval as
a stick of length one. At each round $k$, only some of the stick remains;
$\beta_{k}$ describes the proportion of the remaining stick that is
broken off in round $k$, and $\prw_{DP,k}$ describes the total
amount of remaining stick that is broken off in round $k$. By construction,
not only is each $\prw_{DP,k} \in (0,1)$ but in fact the $\prw_{DP,k}$
add to one (the total stick length) and thus describe a distribution.

\eq{dp_stick} is called a \emph{size-biased} representation
for the following reason. Since the weights $\{\prw_{DP,k}\}_{k=1}^{\infty}$
describe a distribution, we can make draws from this distribution;
each such draw is sometimes thought of as a multinomial draw with
a single trial. In that vein,
typically
we imagine that our data points $\obsm_{mult,n}$ are described as iid
draws conditioned on $\prm_{DP}$, where $\obsm_{mult,n}$ is a random
measure with just a single atom:
\begin{align} \label{eq:dp_like}
\begin{split}
	\obsm_{mult,n} = \delta_{\prl_{mult,n}}; \quad
	\prl_{mult,n} = \prl_{k} \textrm{ with probability } \prw_{DP,k}.
\end{split}
\end{align}
Then the limiting proportion of data points $\obsm_{mult,n}$
with an atom at $\prl_{mult,1}$ (the first atom location chosen)
is $\prw_{DP,1}$. The limiting proportion of
data points with an atom at the next unique atom location chosen
will have size $\prw_{DP,2}$, and so on \citep{broderick:2013:cluster}.

The representation in \eq{dp_stick} is so useful because there is
a familiar, finite-dimensional distribution for each of the atom
weights $\prw_{DP,k}$ of the 
random measure $\prm_{DP}$. This representation allows
approximate
inference via truncation \citep{ishwaran:2001:gibbs} or exact inference via
slice sampling \citep{walker:2007:sampling, kalli:2011:slice}. 

Since the weights $\{\prw_{DP,k}\}_{k=1}^{\infty}$ are constrained to sum 
to one, the Dirichlet process is
not a CRM.\footnote{In fact, the Dirichlet process 
is a normalized gamma process (cf.~\ex{plp_gp_auto_conj})
\citep{ferguson:1973:bayesian}.}
Indeed, there has been much work on size-biased representations for more general
normalized random measures, which include the Dirichlet process as just one example
\citep{perman:1992:size,pitman:1996:random,pitman:1996:some,pitman:2003:poisson}.

By contrast, we here wish to explore size-biasing for non-normalized CRMs. 
In the normalized CRM case, we considered which atom of a random discrete probability measure
was drawn first and what is the distribution of that atom's size.
In the non-normalized CRM case considered in the present work,
when drawing $\obsm$ conditional on $\prm$,
there may be multiple atoms (or one atom or no atoms) of $\prm$ that correspond
to non-zero atoms in $\obsm$. The number will always be finite though by
\asu{fin}.
In this non-normalized CRM case, we wish to consider the sizes of all such atoms
in $\prm$.
Size-biased representations have been developed in the past for particular CRM
examples, notably the beta process
\citep{paisley:2010:stick, broderick:2012:beta}.
And even though
there is typically no interpretation of these representations in terms of a
single stick representing a unit probability mass, they are sometimes
referred to as stick-breaking representations as a nod to the popularity of 
Dirichlet process stick-breaking.

In the beta process case, such size-biased representations
have already been shown to allow approximate inference
via truncation \citep{doshi:2009:variational, paisley:2011:variational}
or exact inference via slice sampling
\citep{teh:2007:stick,broderick:2015:combinatorial}.
Here we
provide general recipes for the creation 
of these representations and illustrate
our recipes by discovering previously unknown 
size-biased representations.

We have seen that a general CRM $\prm$ takes the form of an
a.s.\ discrete random measure:
\begin{equation} \label{eq:crm_discrete}
	\sum_{k=1}^{\infty} \prw_{k} \delta_{\prl_{k}}.
\end{equation}
The fixed-location atoms are straightforward to simulate; there are
finitely many by \asu{fix}, their locations
are fixed, and their weights are assumed to come from finite-dimensional
distributions. The infinite-dimensionality of the Bayesian nonparametric
CRM comes from the ordinary component (cf.\ \mysec{bnp} and \asu{inf}).
So far the only description we have of the ordinary component
is its generation from the countable infinity of points in a Poisson point process.
The next result constructively demonstrates that we
can represent the distributions of the CRM weights $\{\prw_{k}\}_{k=1}^{\infty}$
in \eq{crm_discrete} as a sequence of finite-dimensional distributions,
much as in the familiar Dirichlet process case.

\begin{theorem}[Size-biased representations] \label{thm:size_bias}
	Let $\prm$ be a completely random measure that satisfies \asus{fix} and
	\asuss{inf}; that is, $\prm$ is a CRM with $K_{fix}$ fixed atoms such that
	$K_{fix} < \infty$ and such that the $k$th atom can be written
	$\prw_{fix,k} \delta_{\prl_{fix,k}}$. The ordinary component of $\prm$ has rate measure
	$$
		\fullratem(d\prw \times d\prl) = \wratem(d\prw) \cdot \dordloc(d\prl),
	$$
	where $\dordloc$ is a proper distribution and $\wratem(\obssp) = \infty$.
	Write $\prm = \sum_{k=1}^{\infty} \prw_{k} \delta_{\prl_{k}}$, and let
	$\obsm_{n}$ be generated iid given $\prm$ according to 
	$\obsm_{n} = \sum_{k=1}^{\infty} \obsw_{n,k} \delta_{\prl_{k}}$ with
	$\obsw_{n,k} \indep \denslike(\obsw | \prw_{k})$ for proper, discrete
	probability mass function $\denslike$. And suppose $\obsm_{n}$ and $\prm$ jointly satisfy
	\asu{fin} so that
	$$
		\sum_{\obsw=1}^{\infty} \int_{\prw \in \obssp} \wratem(d\prw) \denslike(\obsw | \prw) < \infty.
	$$
	
	Then we can write
	\begin{align} \label{eq:size_bias_gen}
	\begin{split}
		\prm &= \sum_{m=1}^{\infty} \sum_{\obsw = 1}^{\infty}
			\sum_{j=1}^{\rho_{m,\obsw}} \prw_{m,\obsw,j} \delta_{\prl_{m,\obsw,j}} \\
		\prl_{m,\obsw,k} &\iid \dordloc \textrm{ iid across $m, \obsw, j$} \\
		\rho_{m,\obsw} &\indep \pois\left(
			\rho \left\vert
				\int_{\prw} \wratem(d\prw)
				\denslike(0 | \prw)^{m-1} \denslike(\obsw | \prw)
			\right. \right)
			\textrm{ across $m, \obsw$} \\
		\prw_{m,\obsw,j} &\indep F_{size,m,\obsw}(d\prw) \propto \wratem(d\prw) \denslike(0 | \prw)^{m-1} \denslike(\obsw | \prw) \\
			& \textrm{ iid across $j$ and independently across $m, \obsw$}.
	\end{split}
	\end{align}
\end{theorem}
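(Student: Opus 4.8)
The plan is to handle the fixed-location atoms separately---there are finitely many by \asu{fix}, and their weights already carry finite-dimensional distributions, so no size-biasing is needed---and to concentrate all the work on the ordinary component, whose atom weights $\{\prw_k\}$ are the points of a Poisson point process on $\obssp$ with rate measure $\wratem$. The device I would use is to introduce the latent data sequence $\obsm_1, \obsm_2, \ldots$ (generated iid given $\prm$ exactly as in the hypotheses) purely as a means of ordering and labelling the ordinary atoms, so that averaging over this auxiliary randomness leaves a representation of the law of $\prm$ alone. To each ordinary atom I assign a \emph{discovery round} $m_k := \min\{n : \obsw_{n,k} \ge 1\}$, the first index at which the atom emits a nonzero observation, together with its \emph{discovery value} $\obsw_{m_k,k}$. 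Grouping the atoms by the pair $(m,\obsw)$ is precisely the partition that yields the triple sum in \eq{size_bias_gen}.

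Next I would identify, for each fixed $(m,\obsw)$, the joint law of the atoms having $m_k = m$ and discovery value $\obsw$. The most economical route reuses the posterior machinery already established: by \cor{post}, after $m-1$ all-zero observations the ordinary component of $\prm \mid \obsm_{1:(m-1)}$ has rate measure $\wratem(d\prw)\,\denslike(0 \mid \prw)^{m-1}$, and \thm{post} (part 2, the newly created fixed atoms) then gives that thinning by the value-$\obsw$ likelihood produces a Poisson number of atoms with mean $\int \wratem(d\prw)\,\denslike(0\mid\prw)^{m-1}\denslike(\obsw\mid\prw)$ and weights drawn from the normalized measure $F_{size,m,\obsw} \propto \wratem(d\prw)\,\denslike(0\mid\prw)^{m-1}\denslike(\obsw\mid\prw)$. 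The underlying justification, which I would present in parallel, marks the base Poisson process by each atom's entire observation sequence (the marking theorem), partitions the mark space into the disjoint events that the first nonzero observation falls at position $m$ with value $\obsw$, and invokes the restriction theorem: the restrictions to these disjoint events are \emph{independent} Poisson processes, delivering both the per-group laws and the independence of the $\rho_{m,\obsw}$ and the weights across $(m,\obsw)$. The locations, being iid $\dordloc$ and independent of the weight process, simply carry over. Finiteness of each $\rho_{m,\obsw}$ is immediate from \asu{fin}, since $\int \wratem(d\prw)\,\denslike(0\mid\prw)^{m-1}\denslike(\obsw\mid\prw) \le \int \wratem(d\prw)\,\denslike(\obsw\mid\prw) < \infty$.

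The step I expect to be the main obstacle is \textbf{completeness}: proving that the superposition over all finite $(m,\obsw)$ recovers the whole ordinary component almost surely, with no atoms lost. The clean check is that the group rate measures sum back to the original. Using $\sum_{\obsw \ge 1}\denslike(\obsw\mid\prw) = 1 - \denslike(0\mid\prw)$ together with the geometric series $\sum_{m \ge 1}\denslike(0\mid\prw)^{m-1} = (1-\denslike(0\mid\prw))^{-1}$, one obtains $\sum_{m,\obsw}\wratem(d\prw)\,\denslike(0\mid\prw)^{m-1}\denslike(\obsw\mid\prw) = \wratem(d\prw)$, so by the superposition theorem the reassembled process has exactly the original rate measure $\wratem$ and hence the law of $\prm$'s ordinary component. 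The subtlety concealed in the geometric step is that it is valid only where $\denslike(0\mid\prw) < 1$: any atom with $\denslike(0\mid\prw) = 1$ emits zeros forever and is never discovered, so the representation is faithful precisely when $\{\prw : \denslike(0\mid\prw) = 1\}$ is $\wratem$-null---a condition satisfied in every example of interest (e.g.\ the Poisson and odds-Bernoulli likelihoods, where $\denslike(0\mid\prw) < 1$ for all $\prw > 0$). I would state this condition explicitly and then conclude, from the matching of rate measures, that \eq{size_bias_gen} holds.
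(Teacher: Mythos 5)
Your proposal follows essentially the same route as the paper's proof: both introduce the auxiliary iid observations $\obsm_{1}, \obsm_{2}, \ldots$, group the ordinary-component atoms by discovery round $m$ and discovery value $\obsw$, and obtain each group's Poisson count and normalized weight law by thinning the posterior rate measure $\wratem(d\prw)\,\denslike(0 \mid \prw)^{m-1}$ furnished by \thm{post} in an induction over rounds. Your two additions---the marking/restriction argument for independence across $(m,\obsw)$, and the explicit $\wratem$-nullity condition on $\{\prw : \denslike(0 \mid \prw) = 1\}$ verified by the superposition check that the group rate measures sum back to $\wratem$---are sound refinements of what the paper compresses into its opening ``without loss of generality'' and its assertion that every atom is found in exactly one round, but they do not constitute a different approach.
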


%
\begin{proof}
	By construction, $\prm$ is an a.s.\ discrete random measure
	with a countable infinity of atoms. 
	Without loss of generality, suppose that for every (non-zero) value of
	an atom weight
	$\prw$, there is a non-zero probability of generating an atom with
	non-zero weight $\obsw$ in the likelihood. Now suppose we
	generate $\obsm_{1}, \obsm_{2}, \ldots$. Then, for every atom
	$\prw \delta_{\prl}$
	of $\prm$, there exists some finite $n$ with an atom at $\prl$.
	Therefore, we can enumerate all of the atoms of $\prm$ by enumerating
	\begin{itemize}
		\item Each atom $\prw \delta_{\prl}$ such that there is an atom in
	$\obsm_{1}$ at $\prl$.
		\item Each atom $\prw \delta_{\prl}$ such that there is an atom in $\obsm_{2}$
		at $\prl$ but there is not an atom in $\obsm_{1}$ at $\prl$.
		\\ $\vdots$
		\item Each atom $\prw \delta_{\prl}$ such that there is an atom
		in $\obsm_{m}$ at $\prl$ but there is not an atom in any of $\obsm_{1}, \obsm_{2}, \ldots, \obsm_{m-1}$
		at $\prl$.
		\\ $\vdots$
	\end{itemize}
	Moreover, on the $m$th round of this enumeration, we can further break down the enumeration
	by the value of the observation $\obsm_{m}$ at the atom location:
	\begin{itemize}
		\item Each atom $\prw \delta_{\prl}$ such that there is an atom
		in $\obsm_{m}$ \textbf{of weight $1$} at $\prl$ but there is not an atom in any of $\obsm_{1}, \obsm_{2}, \ldots, \obsm_{m-1}$
		at $\prl$.
		\item Each atom $\prw \delta_{\prl}$ such that there is an atom
		in $\obsm_{m}$ \textbf{of weight $2$} at $\prl$ but there is not an atom in any of $\obsm_{1}, \obsm_{2}, \ldots, \obsm_{m-1}$
		at $\prl$.
		\\ $\vdots$
		\item Each atom $\prw \delta_{\prl}$ such that there is an atom
		in $\obsm_{m}$ \textbf{of weight $\obsw$} at $\prl$ but there is not an atom in any of $\obsm_{1}, \obsm_{2}, \ldots, \obsm_{m-1}$
		at $\prl$.
		\\ $\vdots$
	\end{itemize}
	
	Recall that the values $\prw_{k}$ that form the weights of $\prm$ are generated according
	to a Poisson point process with rate measure $\wratem(d\prw)$. So, on the first round, the 
	values of $\prw_{k}$ such that $\obsw_{1,k} = \obsw$ also holds are generated according
	to a thinned Poisson point process with rate measure
	$$
		\wratem(d\prw) \denslike(\obsw | \prw).
	$$
	In particular, since the rate measure has finite total mass by \asu{fin}, 
	we can define
	$$
		M_{1,\obsw} := \int_{\prw} \wratem(d\prw) \denslike(\obsw | \prw),
	$$
	which will be finite.
	Then the number of atoms $\prw_{k}$ for which $\obsw_{1,k} = \obsw$ is
	$$
		\rho_{1,\obsw} \sim \pois(\rho | M_{1,\obsw}).
	$$
	And each such $\prw_{k}$ has weight with distribution
	$$
		F_{size,1,\obsw}(d\prw) \propto \wratem(d\prw) \denslike(\obsw | \prw).
	$$
	Finally, note from \thm{post} that the posterior
	$\prm | \obsm_{1}$ has weight rate measure
	$$
		\wratem_{1}(d\prw) := \wratem(d\prw) \denslike(0 | \prw).
	$$
	
	Now take any $m > 1$. Suppose, inductively, that the ordinary component of the posterior 
	$\prm | \obsm_{1}, \ldots, \obsm_{m-1}$ has weight rate measure
	$$
		\wratem_{m-1}(d\prw) := \wratem(d\prw) \denslike(0 | \prw)^{m-1}.
	$$
	The atoms in this ordinary component have been selected precisely because
	they have not appeared in any of $\obsm_{1}, \ldots, \obsm_{m-1}$.
	As for $m=1$, we have that the
	atoms $\prw_{k}$ in this ordinary component with corresponding
	weight in $\obsm_{m}$ equal to $\obsw$ are formed by a thinned Poisson point
	process, with rate measure
	$$
		\wratem_{m-1}(d\prw) \denslike(\obsw | \prw) = \wratem(d\prw) \denslike(0 | \prw)^{m-1} \denslike(\obsw | \prw).
	$$
	Since the rate measure has finite total mass by \asu{fin}, 
	we can define
	$$
		M_{m,\obsw} := \int_{\prw} \wratem(d\prw) \denslike(0 | \prw)^{m-1} \denslike(\obsw | \prw),
	$$
	which will be finite.
	Then the number of atoms $\prw_{k}$ for which $\obsw_{1,k} = \obsw$ is
	$$
		\rho_{m,\obsw} \sim \pois(\rho | M_{m,\obsw}).
	$$
	And each such $\prw_{k}$ has weight
	$$
		F_{size,m,\obsw} \propto \wratem(d\prw) \denslike(0 | \prw)^{m-1} \denslike(\obsw | \prw).
	$$
	Finally, note from \thm{post} that the posterior
	$\prm | \obsm_{1:m}$, which can be thought of as generated by prior $\prm | \obsm_{1:(m-1)}$
	and likelihood $\obsm_{m} | \prm$, has weight rate measure
	$$
		\wratem(d\prw) \denslike(0 | \prw)^{m-1} \denslike(0 | \prw) = \wratem_{m}(d\prw),
	$$	
	confirming the inductive hypothesis.
	
	Recall that every atom of $\prm$ is found in exactly one of these rounds
	and that $\obsw \in \Zplus$. Also recall that the atom locations may
	be generated independently and identically across atoms, and independently from
	all the weights, according to proper distribution $\dordloc$ (\mysec{prior_like}).
	To summarize, we have then
	$$
		\prm = \sum_{m=1}^{\infty} \sum_{\obsw = 1}^{\infty}
			\sum_{j=1}^{\rho_{m,\obsw}} \prw_{m,\obsw,j} \delta_{\prl_{m,\obsw,j}},
	$$
	where
	\begin{align*}
		\prl_{m,\obsw,k} &\iid \dordloc \textrm{ iid across $m, \obsw, j$} \\
		M_{m,\obsw} &= \int_{\prw} \wratem(d\prw) \denslike(0 | \prw)^{m-1} \denslike(\obsw | \prw) 
			\textrm{ across $m,\obsw$} \\
		\rho_{m,\obsw} &\indep \pois(\rho | M_{m,\obsw}) \textrm{ across $m, \obsw$} \\
		F_{size,m,\obsw}(d\prw) &\propto \wratem(d\prw) \denslike(0 | \prw)^{m-1} \denslike(\obsw | \prw)
			\textrm{ across $m, \obsw$} \\
		\prw_{m,\obsw,j} &\indep F_{size,m,\obsw}(d\prw)
			\textrm{ iid across $j$ and independently across $m, \obsw$},
	\end{align*}
	as was to be shown.
\end{proof}

The following corollary gives a more detailed recipe for the
calculations in \thm{size_bias}
when the prior is in a conjugate exponential CRM to the likelihood.

\begin{corollary}[Exponential CRM size-biased representations] \label{cor:size}
	Let $\prm$ be an exponential CRM with no fixed-location atoms (thereby trivially 
	satisfying \asu{fix}) 
	such that \asu{inf} holds.
	
	Let $\obsm$ be generated conditional on $\prm$
	according to an exponential CRM with 
	fixed-location atoms at $\{\prl_{k}\}_{k=1}^{\infty}$
	and no ordinary component. 
	Let the distribution of the weight $\obsw_{n,k}$
	at $\prl_{k}$
	have probability mass function
	$$
		\denslike(\obsw | \prw_{k})
			= \basem(\obsw) \exp\left\{ \langle \natpar(\prw_{k}), \suffstat(\obsw) \rangle 
				- \logpart(\prw_{k}) \right\}.
	$$
	
	Suppose that $\prm$ and $\obsm$ jointly
	satisfy \asu{fin}. And let $\prm$ be conjugate to $\obsm$ as in \thm{auto_conj}.
	Then we can write
	 \begin{align} \label{eq:size_bias_exp_fam}
	 \begin{split}
		\prm &= \sum_{m=1}^{\infty}
				\sum_{\obsw=1}^{\infty}
				\sum_{j=1}^{\rho_{m, \obsw}}
					\prw_{m, \obsw, j} \delta_{\prl_{m, \obsw, j}} \\
		\prl_{m, \obsw, j}
			&\iid \dordloc \quad \textrm{iid across $m, \obsw, j$} \\
		M_{m,\obsw}
			&= \massp \cdot \basem(0)^{m-1} \basem(\obsw)
				\cdot \exp\left\{ B(\xi + (m-1) \suffstat(0) + \suffstat(\obsw), \lambda + m) \right\} \\
		\rho_{m, \obsw}
			&\indep \pois\left( \rho | M_{m,\obsw} \right) \\
			& \textrm{independently across $m, \obsw$} \\
		\prw_{m, \obsw, j}
			&\indep f_{size, m, \obsw}(\prw) \; d\prw \\
			&= \exp\left\{ \langle \xi + (m-1) \suffstat(0) + \suffstat(\obsw), \natpar(\prw) \rangle
				+ (\lambda + m)[-\logpart(\prw)] \right. \\
			& \quad \left. {}
				- B(\xi + (m-1) \suffstat(0) + \suffstat(\obsw), \lambda + m) \right\} \\
			& \textrm{iid across $j$ and independently across $m, \obsw$}.
	\end{split}
	\end{align}
\end{corollary}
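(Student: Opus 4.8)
The plan is to specialize \thm{size_bias} by inserting the exponential-family form of $\denslike$ and the exponential-CRM form of $\wratem$ into its two data-dependent quantities---the size-biased weight law $F_{size,m,\obsw}$ and its Poisson mean $M_{m,\obsw}$---and then reading off the simplified expressions. Because $\prm$ has no fixed-location atoms, \thm{size_bias} applies and already delivers the triple-sum structure over $(m,\obsw,j)$, the iid locations $\prl_{m,\obsw,j} \iid \dordloc$, and the independent Poisson counts $\rho_{m,\obsw} \indep \pois(\rho | M_{m,\obsw})$, where $M_{m,\obsw}$ is the total mass and $F_{size,m,\obsw}$ the normalization of the measure $\wratem(d\prw)\,\denslike(0 | \prw)^{m-1}\,\denslike(\obsw | \prw)$. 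Everything outside this one measure carries over verbatim, so the work reduces to evaluating it in closed form.

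First I would multiply the three exponential-family factors. Substituting $\wratem(d\prw) = \massp\exp\{\langle \xi, \natpar(\prw)\rangle + \lambda[-\logpart(\prw)]\}\,d\prw$ from \thm{auto_conj} together with the likelihood density $\denslike(\obsw | \prw) = \basem(\obsw)\exp\{\langle \natpar(\prw), \suffstat(\obsw)\rangle - \logpart(\prw)\}$---taken to the power $m-1$ at $\obsw=0$ and to the first power at $\obsw$---the exponents add. Collecting the coefficient of $\natpar(\prw)$ produces $\xi + (m-1)\suffstat(0) + \suffstat(\obsw)$, one copy of $\suffstat(0)$ for each of the $m-1$ suppressed observations and one copy of $\suffstat(\obsw)$ for the discovering observation, while the coefficient of $-\logpart(\prw)$ becomes $\lambda + m$. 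The base-density factors collect into the prefactor $\massp\,\basem(0)^{m-1}\basem(\obsw)$. Thus the measure is precisely a scaled, unnormalized kernel of the conjugate exponential family of \eq{exp_fam_prior} evaluated at the updated natural parameter $(\xi + (m-1)\suffstat(0) + \suffstat(\obsw),\, \lambda + m)$.

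With the kernel identified, both outputs follow immediately. Normalizing it to a probability density supplies $f_{size,m,\obsw}$ in exactly the stated form, with the log partition function $B$ of the conjugate family contributing the normalizer $\exp\{-B(\xi + (m-1)\suffstat(0) + \suffstat(\obsw), \lambda + m)\}$. For the mean, integrating the measure over $\prw$ factors as the prefactor $\massp\,\basem(0)^{m-1}\basem(\obsw)$ times the integral of the kernel; by the defining property of the log partition function---namely that $\exp\{B(\cdot,\cdot)\}$ is exactly the integral of the corresponding unnormalized kernel---this remaining integral equals $\exp\{B(\xi + (m-1)\suffstat(0) + \suffstat(\obsw), \lambda + m)\}$, which yields the claimed $M_{m,\obsw}$.

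I expect no genuine obstacle beyond bookkeeping; the only point requiring a word is that the updated conjugate family is proper, so that $B$ is finite and $M_{m,\obsw} < \infty$. This is inherited from \thm{size_bias} rather than re-established: since $\denslike(0 | \prw) \in [0,1]$ we have $M_{m,\obsw} \le M_{1,\obsw} = \int \wratem(d\prw)\,\denslike(\obsw | \prw)$, and $\sum_{\obsw} M_{1,\obsw} < \infty$ by \asu{fin}. The conceptual content is simply that conjugacy (\thm{auto_conj}) forces the size-biased weight law and its Poisson mean to share the single normalizing constant $\exp\{B(\cdot,\cdot)\}$ of the conjugate exponential family.
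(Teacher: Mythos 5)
Your proposal is correct and matches the paper's proof essentially verbatim: the paper likewise specializes \thm{size_bias} by substituting the exponential CRM forms of $\wratem$ and $\denslike$, collects the exponents into the updated parameters $\bigl(\xi + (m-1)\suffstat(0) + \suffstat(\obsw),\, \lambda + m\bigr)$, and identifies the resulting integral as $\massp\,\basem(0)^{m-1}\basem(\obsw)\exp\{\priorlogpart(\cdot,\cdot)\}$ via the definition of the conjugate family's log partition function. Your added remark on finiteness of $M_{m,\obsw}$ via \asu{fin} is a minor point the paper leaves implicit in \thm{size_bias}, not a departure in method.
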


%
\begin{proof}
	The corollary follows from \thm{size_bias} by plugging in the particular 
	forms for $\wratem(d\prw)$ and $\denslike(\obsw | \prw)$.
	
	In particular,
	\begin{align*}
		M_{m, \obsw}
			&= \int_{\prw \in \obssp} \wratem(d\prw) \denslike(0 | \prw)^{m-1} \denslike(\obsw | \prw) \\
			&= \int_{\prw \in \obssp} 
				\massp \exp\left\{ \langle \xi, \natpar(\prw) \rangle
				+ \lambda \left[ -\logpart(\prw) \right] \right\} \\
			& \quad {}
				\cdot \left[
				\basem(0) \exp\left\{ \langle \natpar(\prw), \suffstat(0) \rangle 
				- \logpart(\prw) \right\} \right]^{m-1} \\
			& \quad {}
				\cdot 
				\basem(\obsw) \exp\left\{ \langle \natpar(\prw), \suffstat(\obsw) \rangle 
				- \logpart(\prw) \right\} \;
				d\prw \\
			&= \massp \basem(0)^{m-1} \basem(\obsw)
				\exp\left\{
					\priorlogpart\left(\xi + (m-1)\suffstat(0) + \suffstat(\obsw), \lambda + m \right)
				\right\},
	\end{align*}
\end{proof}

\cor{size} can be used to
find
the known size-biased representation of the beta process \citep{thibaux:2007:hierarchical};
we demonstrate this derivation in detail in \ex{bp_size} in \app{size}. Here we
use \cor{size} to discover a new size-biased representation of the gamma process.

\begin{example}
	Let $\prm$ be a gamma process, and let $\obsm_{n}$ be iid Poisson
	likelihood processes conditioned on $\prm$ for each $n$
	as in \ex{plp_gp_auto_conj}. That is, we have
	$$
		\wratem(d\prw) = \massp \prw^{\xi} e^{-\lambda \prw} \; d\prw.
	$$
	And
	$$
		\denslike(\obsw | \prw_{k})
			= \frac{1}{\obsw !} \prw_{k}^{\obsw} e^{-\prw_{k}}
	$$
	with
	\begin{align*}
		\massp > 0, \quad
		\xi \in (-2,-1], \quad
		\lambda > 0; \quad
		\xi_{fix,k} > -1 \textrm{ and } \lambda_{fix,k} > 0 \quad \textrm{for all $k \in [K_{prior,fix}]$}
	\end{align*}
	by \ex{plp_gp_auto_conj}.
	
	We can pick out the following components of $\denslike$:
	\begin{align*}
		\basem(\obsw) = \frac{1}{\obsw !}, \quad
		\suffstat(\obsw) = \obsw, \quad
		\natpar(\prw) = \log(\prw), \quad
		\logpart(\prw) = \prw.
	\end{align*}
	Thus, by \cor{size},
	we have
	\begin{align*}
		f_{size, m, \obsw}(\prw)
			\propto \prw^{\xi + \obsw} e^{-(\lambda + m) \prw}
			\propto \ga\left( \prw \left\vert \xi + \obsw + 1, \lambda + m \right. \right).
	\end{align*}
	We summarize the representation that follows from \cor{size} in the following
	result.
	
	\begin{corollary}
	Let the gamma process be a CRM $\prm$ with fixed-location atom weight distributions
	as in \eq{gp_fixed_atom} and ordinary component weight measure as in
	\eq{gp_ord}. Then we may write
	 \begin{align*}
		\prm &= \sum_{m=1}^{\infty}
				\sum_{\obsw=1}^{\infty}
				\sum_{j=1}^{\rho_{m, \obsw}}
					\prw_{m, \obsw, j} \delta_{\prl_{m, \obsw, j}} \\
		\prl_{m, \obsw, j}
			&\iid \dordloc \quad \textrm{ iid across $m, \obsw, j$} \\
		M_{m,\obsw}
			&=  \gamma \cdot \frac{1}{\obsw!}
				\cdot \Gamma(\xi + \obsw + 1) \cdot (\lambda + m)^{-(\xi + \obsw + 1)}
				\textrm{ across $m, \obsw$} \\
		\rho_{m, \obsw}
			&\indep \pois\left( \rho | M_{m,\obsw} \right)
				\textrm{ across $m, \obsw$} \\
		\prw_{m, \obsw, j}
			&\indep \ga\left( \prw \left\vert \xi + \obsw + 1, \lambda + m \right. \right) \\
			& \textrm{ iid across $j$ and independently across $m, \obsw$}.
	\end{align*}
	\end{corollary}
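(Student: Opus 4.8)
The plan is to apply \cor{size} directly, since the gamma process is an exponential CRM that is conjugate to the Poisson likelihood process by \ex{plp_gp_auto_conj}, so all the hypotheses of \cor{size} are already in force. First I would record the four exponential-family components of the Poisson likelihood, namely $\basem(\obsw) = 1/\obsw!$, $\suffstat(\obsw) = \obsw$, $\natpar(\prw) = \log(\prw)$, and $\logpart(\prw) = \prw$, exactly as identified in \ex{plp_gp_auto_conj}. Substituting these into the size-biased weight density from \cor{size} gives $f_{size,m,\obsw}(\prw) \propto \prw^{\xi + \obsw}\, e^{-(\lambda + m)\prw}$, which I recognize as the kernel of a gamma density; hence $\prw_{m,\obsw,j} \indep \ga(\prw \,|\, \xi + \obsw + 1, \lambda + m)$, matching the claimed atom-weight distribution.

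The second step is to evaluate the Poisson mean $M_{m,\obsw}$ supplied by \cor{size}. Here I would use $\basem(0) = 1/0! = 1$, so the factor $\basem(0)^{m-1}$ is unity, together with $\suffstat(0) = 0$, so that the shift $(m-1)\suffstat(0)$ drops out; this reduces the first argument of $\priorlogpart$ to $\xi + \obsw$ and leaves $M_{m,\obsw} = \massp \cdot (1/\obsw!) \cdot \exp\{\priorlogpart(\xi + \obsw, \lambda + m)\}$. The only genuine computation is the prior log partition function $\priorlogpart$ for this gamma family. From the normalization established in \ex{plp_gp_auto_conj} one has $\exp\{-\priorlogpart(\xi, \lambda)\} = \lambda^{\xi + 1}/\Gamma(\xi + 1)$, so that $\exp\{\priorlogpart(\xi + \obsw, \lambda + m)\} = \Gamma(\xi + \obsw + 1)\,(\lambda + m)^{-(\xi + \obsw + 1)}$. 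Substituting this expression yields precisely the stated formula for $M_{m,\obsw}$.

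Since \cor{size} already provides the full independence structure---locations drawn iid from $\dordloc$, counts $\rho_{m,\obsw} \indep \pois(\rho \,|\, M_{m,\obsw})$ independently across $m$ and $\obsw$, and weights iid within each $(m,\obsw)$ block---the representation follows at once, once these two ingredients are identified. I expect no real obstacle here: the argument is a substitution exercise built on top of \cor{size}. The only place that warrants care is computing $\priorlogpart$ correctly, so that the shape parameter $\xi + \obsw + 1$ of the gamma atom weights and the $\Gamma$-function normalizer appearing in $M_{m,\obsw}$ come out mutually consistent.
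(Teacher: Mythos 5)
Your proposal is correct and follows essentially the same route as the paper: both apply \cor{size} with the exponential-family components $\basem(\obsw)=1/\obsw!$, $\suffstat(\obsw)=\obsw$, $\natpar(\prw)=\log\prw$, $\logpart(\prw)=\prw$ from \ex{plp_gp_auto_conj}, identify $f_{size,m,\obsw}(\prw)\propto\prw^{\xi+\obsw}e^{-(\lambda+m)\prw}$ as the $\ga(\xi+\obsw+1,\lambda+m)$ kernel, and evaluate $M_{m,\obsw}$ via $\exp\{\priorlogpart(\xi+\obsw,\lambda+m)\}=\Gamma(\xi+\obsw+1)(\lambda+m)^{-(\xi+\obsw+1)}$ using $\basem(0)=1$ and $\suffstat(0)=0$. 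Your explicit evaluation of the log partition function is, if anything, slightly more detailed than the paper's, which leaves the $M_{m,\obsw}$ computation implicit in the statement of the corollary.
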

\end{example}


\section{Marginal processes} \label{sec:marg}

In \mysec{size}, although we conceptually made use 
of the observations $\{\obsm_{1}, \obsm_{2}, \ldots\}$,
we focused on a representation of
the prior $\prm$: cf.\ \eqs{size_bias_gen} and
\eqss{size_bias_exp_fam}.
In this section, we provide a representation of the
marginal
of $\obsm_{1:N}$, with $\prm$ integrated out.

The canonical example of a marginal process again comes
from the Dirichlet process (DP). In this case, the full model consists
of the DP-distributed prior on $\prm_{DP}$ (as in \eq{dp_stick})
together with the likelihood for $\obsm_{mult,n}$ conditional on
$\prm_{DP}$ (iid across $n$) described by \eq{dp_like}. Then
the marginal distribution of $\obsm_{mult,1:N}$ is described by
the \emph{Chinese restaurant process}. This marginal takes the following form.

For each $n = 1, 2, \ldots, N$,
\begin{enumerate}
	\item Let $\{\prl_{k}\}_{k=1}^{K_{n-1}}$ be the union of atom locations in
	$\obsm_{mult,1}, \ldots, \obsm_{mult,n-1}$.
	Then
	$\obsm_{mult,n} | \obsm_{mult,1}, \ldots, \obsm_{mult,n-1}$ 
	has a single atom at $\prl$, where
	\begin{align*}
		\prl &= \left\{ \begin{array}{ll}
				\prl_{k} & \textrm{ with probability $\propto$ }
						\sum_{k=1}^{K_{n-1}} \obsm_{mult,m}(\{\prl_{k}\}) \\
				\prl_{new} & \textrm{ with probability $\propto$ } \concp
			\end{array} \right. \\
		\prl_{new} &\sim \dordloc
	\end{align*}
\end{enumerate}

In the case of CRMs, the canonical example of a marginal
process is the Indian buffet process \citep{griffiths:2006:infinite}.
Both the Chinese restaurant
process and Indian buffet process have proven popular for inference since
the underlying infinite-dimensional prior is integrated out in these processes
and only the finite-dimensional marginal remains. By \asu{fin},
we know that the marginal will generally
be finite-dimensional for our CRM Bayesian models.
And thus we have the following general marginal representations 
for such models.

\begin{theorem}[Marginal representations] \label{thm:marg}
	Let $\prm$ be a completely random measure that satisfies \asus{fix} and
	\asuss{inf}; that is, $\prm$ is a CRM with $K_{fix}$ fixed atoms such that
	$K_{fix} < \infty$ and such that the $k$th atom can be written
	$\prw_{fix,k} \delta_{\prl_{fix,k}}$. The ordinary component of $\prm$ has rate measure
	$$
		\fullratem(d\prw \times d\prl) = \wratem(d\prw) \cdot \dordloc(d\prl),
	$$
	where $\dordloc$ is a proper distribution and $\wratem(\obssp) = \infty$.
	Write $\prm = \sum_{k=1}^{\infty} \prw_{k} \delta_{\prl_{k}}$, and let
	$\obsm_{n}$ be generated iid given $\prm$ according to 
	$\obsm_{n} = \sum_{k=1}^{\infty} \obsw_{n,k} \delta_{\prl_{k}}$ with
	$\obsw_{n,k} \indep \denslike(\obsw | \prw_{k})$ for proper, discrete
	probability mass function $\denslike$. And suppose $\obsm_{n}$ and $\prm$ jointly satisfy
	\asu{fin} so that
	$$
		\sum_{\obsw=1}^{\infty} \int_{\prw \in \obssp} \wratem(d\prw) \denslike(\obsw | \prw) < \infty.
	$$
	
	Then the marginal distribution of $\obsm_{1:N}$ is the same as that provided by the 
	following construction.
	
	For each $n = 1, 2, \ldots, N$,
	\begin{enumerate}
		\item Let $\{\prl_{k}\}_{k=1}^{K_{n-1}}$ be the union of atom locations in
		$\obsm_{1}, \ldots, \obsm_{n-1}$. Let $\obsw_{m,k} := \obsm_{m}(\{\prl_{k}\})$.
		Let $\obsw_{n,k}$ denote the weight of
		$\obsm_{n} | \obsm_{1}, \ldots, \obsm_{n-1}$ at $\prl_{k}$. Then
		$\obsw_{n,k}$ has distribution described by the following probability
		mass function:
		$$
			\denscond\left( \obsw_{n,k} = \obsw \left\vert \obsw_{1:(n-1),k} \right. \right)
				= \frac{
					\int_{\prw \in \obssp} 
					\wratem(d\prw) \denslike(\obsw | \prw) \prod_{m=1}^{n-1} \denslike(\obsw_{m,k} | \prw)
				}{
					\int_{\prw \in \obssp} 
					\wratem(d\prw) \prod_{m=1}^{n-1} \denslike(\obsw_{m,k} | \prw)
				}.
		$$
		\item For each $\obsw = 1, 2, \ldots$
		\begin{itemize}
			\item $\obsm_{n}$ has $\rho_{n,\obsw}$ new atoms. That is,
			$\obsm_{n}$ has atoms at locations $\{\prl_{n,\obsw,j}\}_{j=1}^{\rho_{n,\obsw}}$,
			where
			$$
				\{\prl_{n,\obsw,j}\}_{j=1}^{\rho_{n,\obsw}}
					\cap
					\{\prl_{k}\}_{k=1}^{K_{n-1}}
					= \emptyset
					\quad \textrm{ a.s.}
			$$
			Moreover,
			\begin{align*}
				\rho_{n,\obsw}
					&\indep \pois\left( \rho
						\left\vert
							\int_{\prw} \wratem(d\prw)
							\denslike(0 | \prw)^{n-1} \denslike(\obsw | \prw)
						\right. \right)
					\textrm{ across $n, \obsw$} \\ 
				\prl_{n,\obsw,j} &\iid \dordloc(d\prl) 
					\textrm{ across $n, \obsw, j$}.
			\end{align*}
		\end{itemize}
	\end{enumerate}
\end{theorem}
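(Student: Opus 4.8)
The plan is to obtain the marginal law of $\obsm_{1:N}$ through the chain rule, writing it as $\prod_{n=1}^{N} p(\obsm_n \mid \obsm_{1:(n-1)})$, and then to match each one-step conditional against the corresponding step of the proposed construction. The two ingredients I would lean on are the iterated posterior characterization of \cor{post} and the Poisson-process thinning argument already used in \thm{size_bias}. Since the proposed construction is itself sequential in $n$, it suffices to show that, for each $n$, the conditional law of $\obsm_n$ given $\obsm_{1:(n-1)}$ agrees with the law produced by steps (1) and (2).

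First I would fix $n$ and invoke \cor{post} to describe $\prm \mid \obsm_{1:(n-1)}$ as a completely random measure. Its fixed-location component sits at the $K_{n-1}$ locations already observed, where the atom at a previously seen location $\prl_k$ carries weight $\prw_k$ with law proportional to $\wratem(d\prw)\prod_{m=1}^{n-1}\denslike(\obsw_{m,k}\mid\prw)$, and its ordinary component has rate measure $\wratem(d\prw)\,[\denslike(0\mid\prw)]^{n-1}$. The product over $m$ automatically supplies a factor $\denslike(0\mid\prw)$ for every round before $\prl_k$ was first discovered, so this weight law is correct regardless of when $\prl_k$ appeared. Conditioning on $\obsm_{1:(n-1)}$, the likelihood then generates $\obsm_n$ by placing an independent draw $\obsw_{n,k}\sim\denslike(\cdot\mid\prw_k)$ at each atom of this posterior CRM; by complete randomness the contribution of $\obsm_n$ at the fixed-location atoms is independent of its contribution at the ordinary component, which is exactly the split between step (1) and step (2).

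For step (1) I would integrate out the posterior weight at each previously seen location: marginalizing $\obsw_{n,k}\sim\denslike(\cdot\mid\prw_k)$ over the weight law above yields precisely the ratio $\denscond(\obsw_{n,k}=\obsw\mid\obsw_{1:(n-1),k})$ in the statement, and these predictives are independent across $k$. For step (2) I would treat the ordinary component of $\prm\mid\obsm_{1:(n-1)}$ as a Poisson point process with rate $\wratem(d\prw)[\denslike(0\mid\prw)]^{n-1}$, mark each of its atoms with an independent likelihood draw, and keep only those atoms whose mark $\obsw$ is nonzero. By the marking and thinning theorems for Poisson processes, the atoms of mark $\obsw$ form independent Poisson processes across $\obsw\in\Zplus$, so the counts $\rho_{n,\obsw}$ are independent Poisson variables with means $\int\wratem(d\prw)[\denslike(0\mid\prw)]^{n-1}\denslike(\obsw\mid\prw)$, finite by \asu{fin}; the factorization $\fullratem=\wratem\cdot\dordloc$ then makes the new locations iid $\dordloc$ and independent of the weights. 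Assembling step (1) and step (2) and multiplying over $n$ reproduces the full marginal.

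The main obstacle I anticipate is step (2): carefully justifying the marking and thinning of the posterior ordinary component, confirming both the claimed Poisson rate and the independence across distinct values of $\obsw$, while simultaneously using complete randomness to decouple this new-atom contribution from the revisit contribution of step (1). A secondary point requiring care is the bookkeeping noted above, namely checking that the posterior weight law at an already-seen location equals $\wratem(d\prw)\prod_{m}\denslike(\obsw_{m,k}\mid\prw)$ up to normalization uniformly over the time of first discovery, so that the predictive in step (1) matches the stated formula. Finally, if $\prm$ additionally carries original fixed-location atoms, each contributes its own independent predictive via the same integrate-out argument applied to $\dfixwk$ in place of $\wratem$; this adds only finitely many independent terms and does not affect the ordinary-component analysis.
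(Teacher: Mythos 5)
Your proposal is correct and takes essentially the same route as the paper: the paper's proof likewise proceeds sequentially in $n$, maintaining by induction (the content of \cor{post}) that the posterior weight at an already-seen location $\prl_k$ has law proportional to $\wratem(d\prw)\prod_{m=1}^{n-1}\denslike(\obsw_{m,k} \mid \prw)$ while the posterior ordinary component has rate $\wratem(d\prw)\left[\denslike(0 \mid \prw)\right]^{n-1}$, then derives the step-(1) predictive by integrating out that weight and the step-(2) new atoms by the same marked/thinned Poisson point process argument imported from the proof of \thm{size_bias}. Your closing observation about prior fixed-location atoms---handled by the same integrate-out argument with $\dfixwk$ in place of $\wratem$---is a sensible supplement to a case the paper's stated construction leaves implicit.
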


%
\begin{proof}
	We saw in the proof of \thm{size_bias} that the marginal for $\obsm_{1}$
	can be expressed as follows.
	For each $\obsw \in \Zplus$, there are $\rho_{1,\obsw}$
	atoms of $\obsm_{1}$ with weight $\obsw$, where
	\begin{align*}
		\rho_{1,\obsw} &\indep \pois\left(\int_{\prw} \wratem(d\prw) \denslike(\obsw | \prw) \right)
			\textrm{ across $\obsw$}.
	\end{align*}
	These atoms have locations
	$\{\prl_{1,\obsw,j}\}_{j=1}^{\rho_{1,\obsw}}$, where
	\begin{align*}
		\prl_{1,\obsw,j} &\iid \dordloc(d\prl)
			\textrm{ across $\obsw, j$}.
	\end{align*}
	For the upcoming induction, let $K_{1} := \sum_{\obsw = 1}^{\infty} \rho_{1,\obsw}$.
	And let $\{\prl_{k}\}_{k=1}^{K_{1}}$ be the (a.s.\ disjoint by assumption)
	union of the sets $\{\prl_{1,\obsw,j}\}_{j=1}^{\rho_{1,\obsw}}$ across $\obsw$.
	Note that $K_{1}$ is finite by \asu{fin}.
	
	We will also find it useful in the upcoming induction to let $\prm_{post,1}$
	have the distribution of $\prm | \obsm_{1}$.
	Let $\prw_{post,1,\obsw,j} = \prm_{post,1}(\{\prl_{1,\obsw,j}\})$.
	By \thm{post} or the proof of \thm{size_bias}, we have that
	\begin{align*}
		\prw_{post,1,\obsw,j}
			&\indep F_{post,1,\obsw,j}(d\prw)
			\propto \wratem(d\prw) \denslike(\obsw | \prw) \\
			& \quad
				\textrm{ independently across $\obsw$ and iid across $j$}.
	\end{align*}

	Now take any $n > 1$. Inductively,
	we assume $\{\prl_{n-1,k}\}_{k=1}^{K_{n-1}}$ is the union of
	all the atom locations of $\obsm_{1}, \ldots, \obsm_{n-1}$.
	Further assume $K_{n-1}$ is finite.
	Let $\prm_{post, n-1}$ have the distribution
	of $\prm | \obsm_{1}, \ldots, \obsm_{n-1}$. Let 
	$\prw_{n-1,k}$ be the weight of $\prm_{post,n-1}$
	at $\prl_{n-1,k}$. And, for any $m \in [n-1]$,
	let $\obsw_{m,k}$ be the weight of 
	$\obsm_{m}$ at $\prl_{n-1,k}$.
	We inductively assume that 
	\begin{align} \label{eq:marg_step_post}
	\begin{split}
		\prw_{n-1,k}
			&\indep F_{n-1,k}(d\prw)
			\propto \wratem(d\prw) \prod_{m=1}^{n-1} \denslike(\obsw_{m,k} | \prw) \\
			& \textrm{independently across $k$}.
	\end{split}
	\end{align}

	Now let $\prl_{n,k}$ equal $\prl_{n-1,k}$ for $k \in [K_{n-1}]$.
	Let $\obsw_{n,k}$ denote the weight of $\obsm_{n}$
	at $\prl_{n,k}$ for $k \in [K_{n-1}]$.
	Conditional on the atom weight of $\prm$ at $\prl_{n,k}$,
	the atom weights of $\obsm_{1}, \ldots, \obsm_{n-1}, \obsm_{n}$
	are independent. Since the atom weights of $\prm$ are
	independent as well, we have that
	$\obsw_{n,k} | \obsm_{1}, \ldots, \obsm_{n-1}$
	has the same distribution as $\obsw_{n,k} | \obsw_{1,k}, \ldots, \obsw_{n-1,k}$.
	We can write the probability mass function of this distribution as follows.
	\begin{align*}	
		\lefteqn{ \denscond\left( \obsw_{n,k} = \obsw
				\left\vert
					\obsw_{1,k}, \ldots, \obsw_{n-1,k} 
				\right. \right) } \\
			&= \int_{\prw \in \obssp} 
				F_{n-1,k}(d\prw) \denslike(\obsw | \prw) \\
			&= \frac{
					\int_{\prw \in \obssp} 
					\left[\wratem(d\prw) \prod_{m=1}^{n-1} \denslike(\obsw_{m,k} | \prw) \right] \cdot \denslike(\obsw | \prw)
				}{
					\int_{\prw \in \obssp} 
					\wratem(d\prw) \prod_{m=1}^{n-1} \denslike(\obsw_{m,k} | \prw)
				},
	\end{align*}
	where the last line follows from \eq{marg_step_post}.
	
	We next show the inductive hypothesis in \eq{marg_step_post} holds for $n$
	and $k \in [K_{n-1}]$.
	Let $\obsw_{n,k}$ denote the weight
	of $\obsm_{n}$ at $\prl_{n,k}$ for $k \in [K_{n-1}]$.
	Let
	$F_{n,k}(d\prw)$ denote the distribution of $\obsw_{n,k}$ and note that 
	\begin{align*}
		F_{n,k}(d\prw)
			&\propto
			F_{n-1,k}(d\prw) \cdot \denslike(\obsw_{n,k} | \prw) \\
			&= \wratem(d\prw) \prod_{m=1}^{n} \denslike(\obsw_{m,k} | \prw),
	\end{align*}
	which agrees with \eq{marg_step_post} for $n$ when we assume the result for $n-1$.
	
	The previous development covers atoms that are present in at least one of
	$\obsm_{1}, \ldots, \obsm_{n-1}$. Next we consider new atoms
	in $\obsm_{n}$; that is, we consider atoms in $\obsm_{n}$ for which there are
	no atoms at the same location in any of $\obsm_{1}, \ldots, \obsm_{n-1}$.
	
	We saw in the proof of \thm{size_bias} that,
	for each $\obsw \in \Zplus$, there are $\rho_{n,\obsw}$
	new atoms of $\obsm_{n}$ with weight $\obsw$ such that
	\begin{align*}
		\rho_{n,\obsw}
			&\indep \pois\left( \rho 
				\left\vert
					\int_{\prw} \wratem(d\prw)
					\denslike(0 | \prw)^{n-1} \denslike(\obsw | \prw)
				\right. \right)
			\textrm{ across $\obsw$}.
	\end{align*}
	These new atoms have locations
	$\{\prl_{n,\obsw,j}\}_{j=1}^{\rho_{n,\obsw}}$ with
	\begin{align*}
		\prl_{n,\obsw,j} &\iid \dordloc(d\prl)
			\textrm{ across $\obsw, j$}.
	\end{align*}
	By \asu{fin}, $\sum_{\obsw=1}^{\infty} \rho_{n,\obsw} < \infty$. So
	$$
		K_{n} := K_{n-1} + \sum_{\obsw=1}^{\infty} \rho_{n,\obsw}
	$$
	remains finite.
	Let $\prl_{n,k}$ for $k \in \{K_{n-1} + 1, \ldots, K_{n}\}$ index these
	new locations.
	Let 
	$\prw_{n,k}$ be the weight of $\prm_{post,n}$
	at $\prl_{n,k}$ for $k \in \{K_{n-1} + 1, \ldots, K_{n}\}$.
	And let $\obsw_{n,k}$ be the value of $\obsm$ at $\prl_{n,k}$.
	
	We check that the inductive hypothesis holds. By repeated
	application of \thm{post},
	the ordinary component of $\prm | \obsm_{1}, \ldots, \obsm_{n-1}$
	has rate measure
	$$
		\wratem(d\prw) \denslike(0 | \prw)^{n-1}.
	$$
	So, again by \thm{post}, we have that
	\begin{align*}
		\prw_{n,k}
			&\indep F_{n.k}(d\prw)
			\propto \wratem(d\prw) \denslike(0 | \prw)^{n-1} \denslike(\obsw_{n,k} | \prw).
	\end{align*}
	Since $\obsm_{m}$ has value 0 at $\prl_{n,k}$ for $m \in \{1,\ldots,n-1\}$
	by construction, we have that the inductive hypothesis holds.
\end{proof}

As in the case of size-biased representations (\mysec{size} and \cor{size}),
we can find a more detailed recipe when the prior is in
a conjugate exponential CRM to the likelihood.

\begin{corollary}[Exponential CRM marginal representations] \label{cor:marg}
	Let $\prm$ be an exponential CRM with no fixed-location atoms (thereby trivially 
	satisfying \asu{fix}) 
	such that \asu{inf} holds.
	
	Let $\obsm$ be generated conditional on $\prm$
	according to an exponential CRM with 
	fixed-location atoms at $\{\prl_{k}\}_{k=1}^{\infty}$
	and no ordinary component. 
	Let the distribution of the weight $\obsw_{n,k}$
	at $\prl_{k}$
	have probability mass function
	$$
		\denslike(\obsw | \prw_{k})
			= \basem(\obsw) \exp\left\{ \langle \natpar(\prw_{k}), \suffstat(\obsw) \rangle 
				- \logpart(\prw_{k}) \right\}.
	$$
	
	Suppose that $\prm$ and $\obsm$ jointly
	satisfy \asu{fin}. And let $\prm$ be conjugate to $\obsm$ as in \thm{auto_conj}.
	Then the marginal distribution of $\obsm_{1:N}$ is the same as that provided by the 
	following construction.
	
	For each $n = 1, 2, \ldots, N$,
	\begin{enumerate}
		\item Let $\{\prl_{k}\}_{k=1}^{K_{n-1}}$ be the union of atom locations in
		$\obsm_{1}, \ldots, \obsm_{n-1}$. Let $\obsw_{m,k} := \obsm_{m}(\{\prl_{k}\})$.
		Let $\obsw_{n,k}$ denote the weight of
		$\obsm_{n} | \obsm_{1}, \ldots, \obsm_{n-1}$ at $\prl_{k}$. Then
		$\obsw_{n,k}$ has distribution described by the following probability
		mass function:
		\begin{align*}
			\lefteqn{ \denscond\left( \obsw_{n,k} = \obsw
					\left\vert
						\obsw_{1:(n-1),k}
					\right. \right)
					} \\
				&= \basem(\obsw) \exp\left\{
					-\priorlogpart(\xi + \sum_{m=1}^{n-1} \obsw_{m}, \lambda + n-1)
					+ \priorlogpart(\xi + \sum_{m=1}^{n-1} \obsw_{m} + \obsw, \lambda + n)
					\right\}.
		\end{align*}
		\item For each $\obsw = 1, 2, \ldots$
		\begin{itemize}
			\item $\obsm_{n}$ has $\rho_{n,\obsw}$ new atoms. That is,
			$\obsm_{n}$ has atoms at locations $\{\prl_{n,\obsw,j}\}_{j=1}^{\rho_{n,\obsw}}$,
			where
			$$
				\{\prl_{n,\obsw,j}\}_{j=1}^{\rho_{n,\obsw}}
					\cap
					\{\prl_{k}\}_{k=1}^{K_{n-1}}
					= \emptyset
					\quad \textrm{ a.s.}
			$$
			Moreover,
			\begin{align*}
				M_{n, \obsw} &:= \massp \cdot \basem(0)^{n-1} \basem(\obsw) \cdot \exp\left\{ \priorlogpart(\xi + (n-1) \suffstat(0) + \suffstat(\obsw), \lambda + n) \right\} \\
					& \textrm{ across $n, \obsw$} \\
				\rho_{n,\obsw} &\indep \pois\left( \rho \left\vert M_{n, \obsw} \right. \right)
					\textrm{ across $n, \obsw$} \\ 
				\prl_{n,\obsw,j} &\iid \dordloc(d\prl) 
					\textrm{ across $n, \obsw, j$}.
			\end{align*}
		\end{itemize}
	\end{enumerate}
\end{corollary}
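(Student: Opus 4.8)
The plan is to specialize \thm{marg} to the conjugate exponential CRM setting by direct substitution, exactly as \cor{size} specializes \thm{size_bias}. Because $\prm$ is an exponential CRM conjugate to the exponential CRM likelihood $\obsm$ via \thm{auto_conj}, all of \asus{fix}, \asuss{inf}, and \asuss{fin} hold, so \thm{marg} applies verbatim and it remains only to evaluate the two integrals appearing there. The single identity driving every evaluation is that, by the definition of the prior log partition function $\priorlogpart$ in \eq{exp_fam_prior}, the exponential-family kernel integrates as
\begin{equation*}
	\int_{\prw \in \obssp} \exp\left\{ \langle \alpha, \natpar(\prw) \rangle + \beta \left[ -\logpart(\prw) \right] \right\} \; d\prw = \exp\left\{ \priorlogpart(\alpha, \beta) \right\}
\end{equation*}
for any $(\alpha, \beta)$ in the admissible range.

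First I would treat the conditional probability mass function of part 1. Substituting $\wratem(d\prw) = \massp \exp\{ \langle \xi, \natpar(\prw) \rangle + \lambda [-\logpart(\prw)] \} \, d\prw$ together with the exponential-family form of $\denslike$ into both the numerator and denominator of the ratio in \thm{marg}, I would collect the terms paired against $\natpar(\prw)$ and the scalar coefficients against $-\logpart(\prw)$. Each conditioning observation contributes one copy of its sufficient statistic to the $\natpar$ slot and a single $+1$ to the $-\logpart$ slot, so the numerator integrand carries natural parameter $\xi + \suffstat(\obsw) + \sum_{m=1}^{n-1} \suffstat(\obsw_m)$ and exponent $\lambda + n$ on $-\logpart$, while the denominator carries $\xi + \sum_{m=1}^{n-1} \suffstat(\obsw_m)$ and $\lambda + n - 1$. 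Applying the displayed identity to each integral and cancelling the common factors $\massp$ and $\prod_{m=1}^{n-1} \basem(\obsw_m)$ leaves exactly
\begin{equation*}
	\basem(\obsw) \exp\left\{ \priorlogpart\left(\xi + \textstyle\sum_{m=1}^{n-1} \suffstat(\obsw_m) + \suffstat(\obsw), \lambda + n\right) - \priorlogpart\left(\xi + \textstyle\sum_{m=1}^{n-1} \suffstat(\obsw_m), \lambda + n-1\right) \right\},
\end{equation*}
which is the stated form (the statement abbreviates $\suffstat(\obsw_m)$ as $\obsw_m$, consistent with the use of $\suffstat$ in part 2).

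For part 2, the new-atom intensity from \thm{marg} is $\int_{\prw} \wratem(d\prw) \denslike(0\,|\,\prw)^{n-1} \denslike(\obsw\,|\,\prw)$. Substituting the same forms, the $n-1$ copies of $\denslike(0\,|\,\prw)$ contribute $(n-1)\suffstat(0)$ to the natural parameter along with a scalar $\basem(0)^{n-1}$, and the single $\denslike(\obsw\,|\,\prw)$ contributes $\suffstat(\obsw)$ and $\basem(\obsw)$; collecting yields an integrand with natural parameter $\xi + (n-1)\suffstat(0) + \suffstat(\obsw)$ and exponent $\lambda + n$ on $-\logpart$. One further application of the identity gives $M_{n,\obsw} = \massp \, \basem(0)^{n-1} \basem(\obsw) \exp\{ \priorlogpart(\xi + (n-1)\suffstat(0) + \suffstat(\obsw), \lambda + n) \}$, and the Poisson law for $\rho_{n,\obsw}$ together with the $\dordloc$-distributed locations $\prl_{n,\obsw,j}$ carry over unchanged from \thm{marg}. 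The computation is entirely routine; the only points deserving care are the bookkeeping of how each factor increments the two natural-parameter slots, and the observation that every integral encountered is finite because the corresponding posterior atom-weight distributions are proper by \thm{post} (equivalently, the updated hyperparameters remain in the admissible range guaranteed by \asu{fin}), so each $\priorlogpart$ above is well defined.
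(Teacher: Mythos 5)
Your proposal is correct and follows essentially the same route as the paper's own proof: both specialize \thm{marg} by direct substitution of the exponential CRM forms of $\wratem$ and $\denslike$, evaluating the resulting integrals via the identity $\int_{\prw \in \obssp} \exp\left\{ \langle \alpha, \natpar(\prw) \rangle + \beta \left[ -\logpart(\prw) \right] \right\} d\prw = \exp\left\{ \priorlogpart(\alpha, \beta) \right\}$, with the $M_{n,\obsw}$ computation matching the one already done for \cor{size}. If anything, your write-up is slightly more careful than the paper's, since you explicitly flag that the statement's $\obsw_m$ abbreviates $\suffstat(\obsw_m)$ in general and you justify finiteness of every integral via \asu{fin}, points the paper leaves implicit.
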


%
\begin{proof}
		The corollary follows from \thm{marg} by plugging in the 
	forms for $\wratem(d\prw)$ and $\denslike(\obsw | \prw)$.
	
	In particular,
	\begin{align*}
		\lefteqn{
			\int_{\prw \in \obssp} 
				\wratem(d\prw) \prod_{m=1}^{n} \denslike(\obsw_{m,k} | \prw)
		} \\
			&= \int_{\prw \in \obssp} 
				\massp \exp\left\{ \langle \xi, \natpar(\prw) \rangle
				+ \lambda \left[ -\logpart(\prw) \right] \right\}
				\cdot \left[ \prod_{m=1}^{n}
				\basem(\obsw_{m,k}) \exp\left\{ \langle \natpar(\prw), \suffstat(\obsw_{m,k}) \rangle 
				- \logpart(\prw) \right\} \right] \\
			&= \massp \left[ \prod_{m=1}^{n} \basem(\obsw_{m,k}) \right] 
				\priorlogpart\left(\xi + \sum_{m=1}^{n} \suffstat(\obsw_{m,k}), \lambda + n \right).
	\end{align*}
	So
	\begin{align*}
		\lefteqn{ \denscond\left( \obsw_{n,k} = \obsw
				 \left\vert
				 	\obsw_{1:(n-1),k}
				\right. \right)
				} \\
			&= \frac{
				\int_{\prw \in \obssp} 
				\wratem(d\prw) \denslike(\obsw | \prw) \prod_{m=1}^{n-1} \denslike(\obsw_{m,k} | \prw)
			}{
				\int_{\prw \in \obssp} 
				\wratem(d\prw) \prod_{m=1}^{n-1} \denslike(\obsw_{m,k} | \prw)
			} \\
			&= \basem(\obsw) \exp\left\{ -\priorlogpart(\xi + \sum_{m=1}^{n-1} \obsw_{m}, \lambda + n-1) + \priorlogpart(\xi + \sum_{m=1}^{n-1} \obsw_{m} + \obsw, \lambda + n) \right\}.
	\end{align*}
\end{proof}

In \ex{bp_bep_marg} in \app{marg}
we show that \cor{marg} can be used to recover the Indian
buffet process marginal from a beta process prior together with 
a Bernoulli process likelihood. In the following example, 
we discover a new marginal for the Poisson likelihood process
with gamma process prior.

\begin{example}
	Let $\prm$ be a gamma process, and let $\obsm_{n}$ be iid Poisson
	likelihood processes conditioned on $\prm$ for each $n$
	as in \ex{plp_gp_auto_conj}. That is, we have
	$$
		\wratem(d\prw) = \massp \prw^{\xi} e^{-\lambda \prw} \; d\prw
	\quad 
	\textrm{ and }
	\quad
		\denslike(\obsw | \prw_{k})
			= \frac{1}{\obsw !} \prw_{k}^{\obsw} e^{-\prw_{k}}
	$$
	with
	\begin{align*}
		\massp > 0, \quad
		\xi \in (-2,-1], \quad
		\lambda > 0; \quad
		\xi_{fix,k} > -1 \textrm{ and } \lambda_{fix,k} > 0 \quad \textrm{for all $k \in [K_{prior,fix}]$}
	\end{align*}
	by \ex{plp_gp_auto_conj}.
	
	We can pick out the following components of $\denslike$:
	\begin{align*}
		\basem(\obsw) = \frac{1}{\obsw !}, \quad
		\suffstat(\obsw) = \obsw, \quad
		\natpar(\prw) = \log(\prw), \quad
		\logpart(\prw) = \prw.
	\end{align*}
	And we calculate
	\begin{align*}
		\exp\left\{ \priorlogpart(\xi, \lambda) \right\}
			= \int_{\prw \in \obssp}
				\exp\left\{ \langle \xi, \natpar(\prw) \rangle + \lambda [-\logpart(\prw)] \right\} \; d\prw
			= \int_{\prw \in \obssp}
				\prw^{\xi} e^{-\lambda \prw} 
			= \Gamma(\xi + 1) \lambda^{-(\xi + 1)}.
	\end{align*}
	
	So, for $k \in \Zstar$, we have
	\begin{align*}
		\mbp(\obsw_{n} = \obsw) 
			&= \basem(\obsw) \exp\left\{
				-\priorlogpart(\xi + \sum_{m=1}^{n-1} \obsw_{m}, \lambda + n-1)
				+ \priorlogpart(\xi + \sum_{m=1}^{n-1} \obsw_{m} + \obsw, \lambda + n)
				\right\} \\
			&= \frac{1}{\obsw!} \cdot
				\frac{
				(\lambda + n-1)^{\xi + \sum_{m=1}^{n-1} \obsw_{m} + 1}
				}{
				\Gamma(\xi + \sum_{m=1}^{n-1} \obsw_{m} + 1)
				} 
				\cdot
				\frac{
				\Gamma(\xi + \sum_{m=1}^{n-1} \obsw_{m} + \obsw + 1)
				}{
				(\lambda + n)^{\xi + \sum_{m=1}^{n-1} \obsw_{m} + \obsw + 1}
				} \\
			&= \frac{
				\Gamma(\xi + \sum_{m=1}^{n-1} \obsw_{m} + \obsw + 1)
				}{
				\Gamma(\obsw + 1)
				\Gamma(\xi + \sum_{m=1}^{n-1} \obsw_{m} + 1)
				} 
				\cdot
				\left(
					\frac{
						\lambda + n-1
					}{
						\lambda + n
					}
				\right)^{\xi + \sum_{m=1}^{n} \obsw_{m} + 1}
				\left(
					\frac{1}{
						\lambda + n
					}
				\right)^{\obsw} \\
			&= \negbin\left( \obsw 
				\left\vert
					\xi + \sum_{m=1}^{n-1} \obsw_{m} + 1, (\lambda + n)^{-1}
				\right. \right).
	\end{align*}
	And
	\begin{align*}
		M_{n, \obsw}
			&:= \massp \cdot \basem(0)^{n-1} \basem(\obsw)
				\cdot \exp\left\{ B(\xi + (n-1) \suffstat(0) + \suffstat(\obsw), \lambda + n) \right\} \\
			&= \massp \cdot \frac{1}{\obsw!}
				\cdot \Gamma(\xi + \obsw + 1) (\lambda+n)^{-(\xi + \obsw + 1)}.
	\end{align*}

	We summarize the marginal distribution representation of $\obsm_{1:N}$ that
	follows from \cor{marg} in the following result.
	
	\begin{corollary}
	Let $\prm$ be a gamma process with fixed-location atom weight distributions
	as in \eq{gp_fixed_atom} and ordinary component weight measure as in
	\eq{gp_ord}. Let $X_{n}$ be drawn, iid across $n$, conditional on $\prm$
	according to a Poisson likelihood process with fixed-location
	atom weight distributions as in \eq{plp_dens}.
	Then $X_{1:N}$ has the same distribution as the following construction.
	
	For each $n = 1, 2, \ldots, N$,
	\begin{enumerate}
		\item Let $\{\prl_{k}\}_{k=1}^{K_{n-1}}$ be the union of atom locations in
		$\obsm_{1}, \ldots, \obsm_{n-1}$. Let $\obsw_{m,k} := \obsm_{m}(\{\prl_{k}\})$.
		Let $\obsw_{n,k}$ denote the weight of
		$\obsm_{n} | \obsm_{1}, \ldots, \obsm_{n-1}$ at $\prl_{k}$. Then
		$\obsw_{n,k}$ has distribution described by the following probability
		mass function:
		\begin{align*}
			\denscond\left(
						\obsw_{n,k} = \obsw
						\left\vert
							\obsw_{1:(n-1),k}
						\right.
					\right)
				= \negbin\left( \obsw
					\left\vert
						\xi + \sum_{m=1}^{n-1} \obsw_{m,k} + 1, (\lambda + n)^{-1}
					\right. \right).
		\end{align*}
		\item For each $\obsw = 1, 2, \ldots$
		\begin{itemize}
			\item $\obsm_{n}$ has $\rho_{n,\obsw}$ new atoms. That is,
			$\obsm_{n}$ has atoms at locations $\{\prl_{n,\obsw,j}\}_{j=1}^{\rho_{n,\obsw}}$,
			where
			$$
				\{\prl_{n,\obsw,j}\}_{j=1}^{\rho_{n,\obsw}}
					\cap
					\{\prl_{k}\}_{k=1}^{K_{n-1}}
					= \emptyset
					\quad \textrm{ a.s.}
			$$
			Moreover,
			\begin{align*}
				M_{n, \obsw} &:= \massp \cdot \frac{1}{\obsw!}
					\cdot \frac{
							\Gamma(\xi + \obsw + 1)
						}{
							(\lambda+n)^{\xi + \obsw + 1}
						} \\
					& \textrm{ across $n, \obsw$} \\
				\rho_{n,\obsw} &\indep \pois\left( \rho \left\vert M_{n, \obsw} \right. \right)
					\textrm{ independently across $n, \obsw$} \\
				\prl_{n,\obsw,j} &\iid \dordloc(d\prl)
					\textrm{ independently across $n, \obsw$ and iid across $j$}.
			\end{align*}
		\end{itemize}
	\end{enumerate}
	\end{corollary}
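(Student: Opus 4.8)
The plan is to apply \cor{marg} directly, since \ex{plp_gp_auto_conj} has already established that the gamma process is a conjugate Bayesian nonparametric prior for the Poisson likelihood process. First I would record the exponential-family quantities read off from the Poisson mass function in \eq{plp_dens}---namely $\basem(\obsw) = 1/\obsw!$, $\suffstat(\obsw) = \obsw$, $\natpar(\prw) = \log\prw$, and $\logpart(\prw) = \prw$---together with the prior log-partition normalizer $\exp\{\priorlogpart(\xi,\lambda)\} = \Gamma(\xi+1)\lambda^{-(\xi+1)}$, which is nothing but the gamma-density normalizing constant obtained by integrating $\prw^{\xi} e^{-\lambda\prw}$ over $\obssp$. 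These are precisely the inputs that \cor{marg} consumes, so the entire result reduces to substitution into the two formulas supplied there.

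The next step is to evaluate the conditional probability mass function from part~(1) of \cor{marg}. Writing out the difference of log-partition terms and simplifying the resulting ratio of gamma normalizers produces a ratio of Gamma functions (playing the role of a generalized binomial coefficient) multiplied by a power of $(\lambda+n-1)/(\lambda+n)$ and a power of $1/(\lambda+n)$. The one genuine recognition step is collapsing this expression into $\negbin(\obsw \,|\, \xi + \sum_{m=1}^{n-1}\obsw_{m,k} + 1, (\lambda+n)^{-1})$, which follows by matching the negative binomial stopping-number parameter to the shape $\xi + \sum_{m=1}^{n-1}\obsw_{m,k} + 1$ and the success probability to $(\lambda+n)^{-1}$, against the exponents that appear after simplification.

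Finally, the new-atom Poisson rate is obtained by direct substitution into $M_{n,\obsw} = \massp \cdot \basem(0)^{n-1}\basem(\obsw)\exp\{\priorlogpart(\xi + (n-1)\suffstat(0) + \suffstat(\obsw), \lambda+n)\}$. Since $\basem(0) = 1$ and $\suffstat(0) = 0$, this reduces immediately to $\massp \cdot \frac{1}{\obsw!}\Gamma(\xi+\obsw+1)(\lambda+n)^{-(\xi+\obsw+1)}$, with the new-atom locations drawn iid from $\dordloc$ exactly as prescribed by \cor{marg}. The hard part, to the extent there is one, is the algebraic recognition of the negative binomial in the second step; the first and third steps are routine bookkeeping substitutions into \cor{marg}.
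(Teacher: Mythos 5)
Your proposal is correct and follows the paper's own derivation essentially step for step: the paper likewise reads off $\basem(\obsw)=1/\obsw!$, $\suffstat(\obsw)=\obsw$, $\natpar(\prw)=\log\prw$, $\logpart(\prw)=\prw$ from \eq{plp_dens}, computes $\exp\{\priorlogpart(\xi,\lambda)\}=\Gamma(\xi+1)\lambda^{-(\xi+1)}$, and substitutes into the two formulas of \cor{marg}, recognizing the resulting ratio of Gamma-function normalizers times $\bigl((\lambda+n-1)/(\lambda+n)\bigr)^{\xi+\sum_{m=1}^{n-1}\obsw_{m,k}+1}(\lambda+n)^{-\obsw}$ as the stated negative binomial and simplifying $M_{n,\obsw}$ using $\basem(0)=1$, $\suffstat(0)=0$. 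No gaps; same approach.
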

	
\end{example}

\section{Discussion}

In the preceding sections, we have shown how to calculate posteriors for general 
CRM-based priors and likelihoods for Bayesian nonparametric models. We have also 
shown how to represent Bayesian nonparametric priors as a sequence of finite 
draws, and full Bayesian nonparametric models via finite marginals.  
We have introduced a notion of exponential families for CRMs, which we call 
exponential CRMs, that has allowed us to specify automatic Bayesian nonparametric
conjugate priors for exponential CRM likelihoods. And we have demonstrated
that our exponential CRMs allow particularly straightforward recipes
for size-biased and marginal representations of Bayesian nonparametric models.
Along the way, we have proved that the gamma process is a conjugate prior 
for the Poisson likelihood process and the beta prime process is a conjugate prior
for the odds Bernoulli process. We have discovered a size-biased representation
of the gamma process and a marginal representation of the gamma process 
coupled with a Poisson likelihood process.

All of this work has relied heavily on the description of Bayesian nonparametric
models in terms of completely random measures. As such, we have worked 
very particularly with pairings of real values---the CRM atom weights,
which we have interpreted as
trait frequencies or rates---together with trait descriptors---the CRM atom locations.
However, all of our proofs broke into essentially two parts: the fixed-location atom
part and the ordinary component part. The fixed-location atom development
essentially translated into the usual finite version of Bayes Theorem
and could easily be extended to full Bayesian models where
the prior describes a random element that need not be real-valued.
Moreover, the ordinary component development relied entirely on its
generation as a Poisson point process over a product space. It seems
reasonable to expect that our development might carry through when
the first element in this tuple need not be real-valued. And thus we believe
our results are suggestive of broader results over more general spaces.

\section*{Acknowledgements}

Support for this project was provided by
ONR under the Multidisciplinary University Research Initiative (MURI) program (N00014-11-1-0688).
T.~Broderick was supported by a Berkeley Fellowship. A.~C.~Wilson was supported
by an NSF Graduate Research Fellowship.

\newpage
\appendix

\section{Further automatic conjugate priors} \label{app:auto_conj}

We use \thm{auto_conj} to calculate automatic conjugate priors
for further exponential CRMs.

\begin{example} \label{ex:bep_auto_conj}
	Let $\obsm$ be generated according to a Bernoulli process
	as in \ex{bp_bep}. That is, $\obsm$ has an exponential CRM distribution
	with $K_{like,fix}$ fixed-location atoms, where $K_{like,fix} < \infty$
	in accordance with \asu{fix}:
	$$
		\obsm = \sum_{k=1}^{K_{like,fix}} \obsw_{like,k} \delta_{\prl_{like,k}}.
	$$
	The weight of the $k$th atom, $\obsw_{like,k}$,
	has support on $\{0,1\}$ and has a Bernoulli density
	with parameter $\prw_{k} \in (0,1]$:
	\begin{align*}
		\denslike(\obsw | \prw_{k})
			&= \prw_{k}^{\obsw} (1-\prw_{k})^{1-\obsw} \\
			&= \exp\left\{ \obsw \log(\prw_{k} / (1-\prw_{k})) + \log(1-\prw_{k}) \right\}
			.
	\end{align*}
	The final line is rewritten to emphasize the exponential family form
	of this density, with
	\begin{align*}
		\basem(\obsw) &= 1 \\
		\suffstat(\obsw) &= \obsw \\
		\natpar(\prw) &= \log\left( \frac{ \prw }{ 1-\prw } \right) \\
		\logpart(\prw) &= -\log(1-\prw).
	\end{align*}
	
	Then, by \thm{auto_conj}, $\obsm$ has a Bayesian nonparametric
	conjugate prior for
	$$
		\prm := \sum_{k=1}^{K_{like,fix}} \prw_{k} \delta_{\prl_{k}}.
	$$
	This conjugate prior has two parts.
	
	First, $\prm$ has a set of $K_{prior,fix}$ fixed-location atoms
	at some subset of the $K_{like,fix}$ fixed locations of $\obsm$. The $k$th
	such atom has random weight $\prw_{fix,k}$ with density
	\begin{align*}
		f_{prior,fix,k}(\prw)
			&= \exp\left\{ \langle \xi_{fix,k}, \natpar(\prw) \rangle + \lambda_{fix,k} 
				\left[ -\logpart(\prw) \right] - B(\xi_{fix,k}, \lambda_{fix,k}) \right\} \\
			&= \prw^{\xi_{fix,k}} (1-\prw)^{\lambda_{fix,k}-\xi_{fix,k} }
				\exp\left\{- B(\xi_{fix,k}, \lambda_{fix,k}) \right\} \\
			&= \tb\left( \prw
				\left\vert
					\xi_{fix,k} + 1, \lambda_{fix,k}-\xi_{fix,k} + 1
				\right. \right),
	\end{align*}
	where $\tb(\prw | a, b)$ denotes the beta density with shape parameters
	$a > 0$ and $b > 0$. So we must have fixed hyperparameters
	$\xi_{fix,k} > -1$ and $\lambda_{fix,k} > \xi_{fix,k} - 1$.
	Further,
	$$
		\exp\left\{- B(\xi_{fix,k}, \lambda_{fix,k}) \right\}
			= \frac{
					\Gamma(\lambda_{fix,k} + 2)
				}{
					\Gamma(\xi_{fix,k} + 1)
					\Gamma(\lambda_{fix,k}-\xi_{fix,k} + 1)
				}
	$$
	to ensure normalization.
	
	Second, $\prm$ has an ordinary component characterized by any proper distribution
	$\dordloc$ and weight rate measure
	\begin{align*}
		\wratem(d\prw)
			&= \massp \exp\left\{ \langle \xi, \natpar(\prw) \rangle
				+ \lambda \left[ -\logpart(\prw) \right] \right\} \; d\prw \\
			&= \massp \prw^{\xi} (1-\prw)^{\lambda - \xi} \; d\prw.
	\end{align*}
	
	Finally, we need to choose the allowable hyperparameter ranges for $\massp$,
	$\xi$, and $\lambda$. $\massp > 0$ ensures $\wratem$ is a measure.
	By \asu{inf}, we must have $\wratem(\obssp) = \infty$, so $\wratem$ 
	must represent an improper beta distribution. As such, we require
	either $\xi + 1 \le 0$ or $\lambda - \xi \le 0$.
	By \asu{fin}, we must have
	\begin{align*}
		\lefteqn{ \sum_{\obsw=1}^{\infty} \int_{\prw \in \obssp}
			\wratem(d\prw) \cdot \denslike(\obsw | \prw) } \\
			&= \int_{\prw \in (0,1]} \wratem(d\prw) \denslike(1 | \prw) \\
			& \textrm{ since the support of $\obsw$ is $\{0,1\}$ and the support of $\prw$ is $(0,1]$} \\
			&= \massp \int_{\prw \in (0,1]}
				\prw^{\xi} (1-\prw)^{\lambda - \xi} \; d\prw
				\cdot \prw \\
			&< \infty
	\end{align*}
	Since the integrand is the kernel of a beta distribution, the integral
	is finite if and only if $\xi + 2 > 0$ and $\lambda - \xi + 1 > 0$.
	
	Finally, then the hyperparameter restrictions can be summarized as:
	\begin{align*}
		\massp &> 0 \\
		\xi &\in (-2,-1] \\
		\lambda &> \xi - 1 \\
		\xi_{fix,k} &> -1 \textrm{ and } \lambda_{fix,k} > \xi_{fix,k} - 1 \quad \textrm{for all $k \in [K_{prior,fix}]$}
	\end{align*}
	By setting $\discp = \xi + 1$, $\concp = \lambda + 2$, $\rho_{fix,k} = \xi_{fix,k} + 1$, and
	$\sigma_{fix,k} = \lambda_{fix,k} - \xi_{fix,k} + 1$, we recover the hyperparameters of \eq{bp_params}
	in \ex{bp_bep}. Here, by contrast to \ex{bp_bep}, we found the conjugate prior and its hyperparameter
	settings given just the Bernoulli process likelihood.
	Henceforth, we use the parameterization of the beta process above.
\end{example}

\section{Further size-biased representations} \label{app:size}

\begin{example} \label{ex:bp_size}
	Let $\prm$ be a beta process, and let $\obsm_{n}$ be iid Bernoulli
	processes conditioned on $\prm$ for each $n$
	as in \ex{bep_auto_conj}. That is, we have
	$$
		\wratem(d\prw) = \massp \prw^{\xi} (1-\prw)^{\lambda - \xi} \; d\prw.
	$$
	And
	$$
		\denslike(\obsw | \prw_{k})
			= \prw_{k}^{\obsw} (1-\prw_{k})^{1-\obsw}
	$$
	with
	\begin{align*}
		\massp &> 0 \\
		\xi &\in (-2,-1] \\
		\lambda &> \xi - 1 \\
		\xi_{fix,k} &> -1 \textrm{ and } \lambda_{fix,k} > \xi_{fix,k} - 1 \quad \textrm{for all $k \in [K_{prior,fix}]$}
	\end{align*}
	by \ex{bep_auto_conj}.
	
	We can pick out the following components of $\denslike$:
	\begin{align*}
		\basem(\obsw) &= 1 \\
		\suffstat(\obsw) &= \obsw \\
		\natpar(\prw) &= \log\left( \frac{ \prw }{ 1-\prw } \right) \\
		\logpart(\prw) &= -\log(1-\prw).
	\end{align*}
	Thus, by \cor{size},
	 \begin{align*}
		\prm &= \sum_{m=1}^{\infty}
				\sum_{\obsw=1}^{\infty}
				\sum_{j=1}^{\rho_{m, \obsw}}
					\prw_{m, \obsw, j} \delta_{\prl_{m, \obsw, j}} \\
		\prl_{m, \obsw, j}
			&\iid \dordloc \quad \textrm{iid across $m, \obsw, j$} \\
		\prw_{m, \obsw, j}
			&\indep f_{size, m, \obsw}(\prw) \; d\prw \\
			&\propto \prw^{\xi + \obsw} (1-\prw)^{\lambda + m - \xi - \obsw} \; d\prw \\
			&\propto \tb\left( \prw 
				\left\vert
					\xi + \obsw, \lambda - \xi + m - \obsw
				\right. \right) \; d\prw \\
			& \textrm{iid across $j$ and independently across $m, \obsw$} \\
		M_{m, \obsw}
			&:= \massp
				\cdot \frac{
					\Gamma(\xi + \obsw + 1)
					\Gamma(\lambda - \xi + m - \obsw + 1)
				}{
					\Gamma(\lambda + m + 2)
				} \\
		\rho_{m, \obsw}
			&\indep \pois\left( M_{m, \obsw} \right) \\
			& \textrm{across $m, \obsw$}
	\end{align*}
	
	\citet{broderick:2012:beta} and  \citet{paisley:2012:stick} have previously
	noted that this size-biased representation of the
	beta process arises from the Poisson point process.
\end{example}

\section{Further marginals} \label{app:marg}

\begin{example} \label{ex:bp_bep_marg}
	Let $\prm$ be a beta process, and let $\obsm_{n}$ be iid Bernoulli
	processes conditioned on $\prm$ for each $n$
	as in \exs{bep_auto_conj} and \exss{bp_size}.
	
	We calculate the main components of \cor{marg} for this
	pair of
	processes. In particular, we have
	\begin{align*}
		\mbp(\obsw_{n} = 1) 
			&= \basem(k) \exp\left\{ -B(\xi + \sum_{m=1}^{n-1} \obsw_{m}, \lambda + n-1) + B(\xi + \sum_{m=1}^{n-1} \obsw_{m} + 1, \lambda + n) \right\} \\
			&= \frac{
					\Gamma(\lambda + n-1 + 2)
				}{
					\Gamma(\xi + \sum_{m=1}^{n-1} \obsw_{m} + 1)
					\Gamma(\lambda + n-1 - \xi - \sum_{m=1}^{n-1} \obsw_{m} + 1)
				} \\
			& \quad {} \cdot
				\frac{
					\Gamma( \xi + \sum_{m=1}^{n-1} \obsw_{m} + 1 + 1 )
					\Gamma( \lambda + n - \xi - \sum_{m=1}^{n-1} \obsw_{m} - 1 + 1 )
				}{
					\Gamma( \lambda + n + 2 )
				} \\
			&= \frac{
					\xi + \sum_{m=1}^{n-1} \obsw_{m} + 1
				}{
					\lambda + n + 1
				}
	\end{align*}
	And
	\begin{align*}
		M_{n, 1}
			&:= \massp \cdot \basem(0)^{n-1} \basem(1)
				\cdot \exp\left\{ B(\xi + (n-1) \suffstat(0) + \suffstat(1), \lambda + n) \right\} \\
			&= \massp
				\cdot \frac{
					\Gamma(\xi + 1 + 1)
					\Gamma(\lambda + n - \xi - 1 + 1)
				}{
					\Gamma(\lambda + n + 2)
				}
	\end{align*}

	Thus, the marginal distribution of $\obsm_{1:N}$ is the same as that provided by the 
	following construction.
	
	For each $n = 1, 2, \ldots, N$,
	\begin{enumerate}
		\item At any location $\prl$ for which there is some atom in $\obsm_{1}, \ldots, \obsm_{n-1}$,
		let $\obsw_{m}$ be the weight of $\obsm_{m}$ at $\prl$ for $m \in [n-1]$. Then
		we have that
		$\obsm_{n} | \obsm_{1}, \ldots, \obsm_{n-1}$ has weight $\obsw_{n}$ at $\prl$, where
		\begin{align*}
			\mbp(d\obsw_{n}) 
				&= \bern\left( \obsw_{n} 
					\left\vert
						\frac{
							\xi + \sum_{m=1}^{n-1} \obsw_{m} + 1
						}{
							\lambda + n + 1
						}
					\right.
				\right)
		\end{align*}
		\item $\obsm_{n}$ has $\rho_{n,1}$ atoms at locations $\{\prl_{n,1,j}\}$
			with $j \in [\rho_{n,1}]$ where there have not yet been atoms in any of $\obsm_{1},\ldots,\obsm_{n-1}$. Moreover,
			\begin{align*}
				M_{n, 1} &:= \massp
						\cdot \frac{
							\Gamma(\xi + 1 + 1)
							\Gamma(\lambda + n - \xi - 1 + 1)
						}{
							\Gamma(\lambda + n + 2)
						} \\
					& \textrm{ across $n$} \\
				\rho_{n,1} &\indep \pois\left( M_{n, 1} \right)
					\textrm{ across $n, \obsw$} \\
				\prl_{n,1,j} &\iid \dordloc(d\prl)
					\textrm{ across $n, j$}
			\end{align*}
	\end{enumerate}
	
	Here, we have recovered the three-parameter extension of the Indian buffet process
	\citep{teh:2009:indian, broderick:2013:cluster}.
\end{example}


\newpage
\bibliography{draft}
\bibliographystyle{plainnat}

\end{document}